\newcommand{\Nt}{M}
\newcommand{\Np}{N}
\newcommand{\tstep}{h}
\newcommand{\tvecf}{\skew{4}\tilde{\vec f}}
\newcommand{\tG}{\widetilde{G}}
\newcommand{\Cost}[1]{\mathrm{Cost}^{(\operatorname{#1})}}
\newcommand{\expect}[1]{\mean{#1}}
\newcommand{\variance}[1]{\operatorname{Var}\bp{#1}}
\newcommand{\Ncoarse}{\Nt_{0}}
\newcommand{\MLMC}{\mathrm{MLMC}}
\newcommand{\MC}{\mathrm{MC}}
\newcommand{\Yhat}{\widehat{Y}}
\newcommand{\Vhat}{\widehat{V}}
\newcommand{\QOI}{\mathcal{P}}
\newcommand{\Phat}{\widehat{\QOI}}
\newcommand{\Ptilde}{\widetilde{\QOI}}
\newcommand\Cpinfty{C^\infty_{\mathrm{poly}}}
\newcommand\rmF{{\mathrm{F}}}
\newcommand\erm{{\mathrm{e}}}
\newcommand\kB{{k_\mathrm{B}}}
\title{Improving MLMC for SDEs with application to the
  Langevin equation} \author{Eike H. M\"uller \and Rob
  Scheichl \and Tony Shardlow}
\begin{document}
\maketitle
\begin{abstract}
  { This paper applies several well-known tricks from the
    numerical treatment of deterministic differential
    equations to improve the efficiency of the Multilevel
    Monte Carlo (MLMC) method for stochastic differential
    equations (SDEs) and especially the Langevin
    equation. We use modified equations analysis to
    circumvent the need for a strong-approximation theory
    for the integrator, and we apply this to introduce MLMC
    for Langevin-type equations with integrators based on
    operator splitting. We combine this with extrapolation
    and 
    investigate the use of discrete random variables in
    place of the Gaussian increments, which is a well-known
    technique for the weak approximation of SDEs. We show
    that, for small-noise problems, discrete random variables
    can lead to an increase in efficiency of almost two
    orders of magnitude for practical levels of accuracy.}
  \begin{description}
  \item[Keywords] numerical solution of stochastic
    differential equations, modified equations, 
    geometric integrators, weak approximation, extrapolation.
  \end{description}
\end{abstract}

\section{Introduction}

This paper is concerned with the numerical solution of
stochastic differential equations (SDEs) by the Multilevel
Monte Carlo (MLMC) method.  MLMC
\citep{Heinrich2001,MR2436856} is an important
variance-reduction method that is well established by now
and has been successfully applied to a wide class of
problems in stochastic simulation and in uncertainty
quantification; for example, \citep{GilesSzpruch2013,
  Cliffe2011,Dereich2011,Barthetal2011,GilesReisinger2012,Mishraetal2012,
  Hoeletal2012,Andersonetal2011}. The variance reduction in
MLMC is achieved by computing approximations of the solution
on different ``levels'' consisting, in the SDE case, of
numerical integrators with different time-step sizes. These
computations are then combined in an efficient way to define
a multilevel estimator for the moments that has a smaller
variance than the standard Monte Carlo estimator and can
therefore be computed faster.  

Let $(\Omega, {\cal F}, \sprob)$ denote a probability space
and let $\smean$ and $\operatorname{Var}$ denote the
expectation and variance with respect to $\sprob$.  Consider
first the initial-value problem
\begin{equation}\label{eq:sde}
d\vec X%
=\vec f(\vec X)\,dt+ G(\vec X)\,d\vec W(t),\qquad%
\vec X(0)%
=\vec X_0,
\end{equation}
for $\vec f\colon \real^d \to \real^d$ and $G\colon \real^d
\to \real^{d\times m}$ and initial data $\vec
X_0\in\real^d$. Here $\vec W(t)$ is a vector of $m$ \iid
standard Brownian motions on $(\Omega,{\cal F},\sprob)$.
Suppose that there exists a well-defined solution $\vec
X(t)$ when \cref{eq:sde} is interpreted as an Ito integral
equation. For simplicity, we only consider approximating
moments of the solution at a prescribed end time as the
quantities of interest, but other, more complicated
functionals could also be studied. That is, we are
interested in computing $\mean{\phi(\vec X(T))}$ for some
$\phi\colon \real^d\to\real$ and time $T>0$. Consider the
approximation by a sequence of random variables $\vec
X_n\approx \vec X(t_n)$ for $t_n=n\tstep$ with
$n\in\naturals$ and a time step $\tstep$.  For example,
$\vec X_n$ may result from the Euler--Maruyama method
\begin{equation}
  \label{eq:1}
  \vec X_{n+1}%
  =\vec X_n+ \vec f(\vec X_n)\tstep%
  + G(\vec  X_n)\sqrt{\tstep}\, \vec\xi_n,  
\end{equation}
with $\vec\xi_n\sim \Nrm(0,I)$ \iid. In this case, $\vec X_n$ is 
a weak first-order approximation to $\vec X(t_n)$ so that, for any
$\phi\colon\real^d\to\real$ in a suitable class of test functions ${\cal C}$,
\[
\sup_{0\le t_n\le T}%
\mean{\phi(\vec X(t_n))}-\mean{\phi(\vec X_n)}%
=\order{\tstep}.
\]
If $\vec f$ and $G$ are sufficiently smooth, $\mathcal{C}$
contains all infinitely differentiable functions whose
derivatives are polynomially bounded; for example,
\citep[Theorem~14.5.1]{MR1214374}.

In some cases
\citep{MR2214851,MR2783188}, it is possible to find a second SDE,
called the modified SDE with solution $\vec X_h(t)$, such
that $\vec X_n$ is a second-order weak approximation to
$\vec X_h(t)$; that is,
\begin{equation}\label{eq:aa}
\sup_{0\le t_n\le T}\mean{\phi(\vec X_h(t_n))}%
-\mean{\phi(\vec X_n)}%
=\order{\tstep^2}.
\end{equation}
Then, the solution of the modified equation $\vec X_h(t)$
is an order of $h$ closer to the numerical solution
than $\vec X(t)$.  The modified equation takes the form
\begin{equation}\label{eq:mod}
d\vec X_h%
=\tvecf(\vec X_h)\,dt+ \tG(\vec X_h)\,d\vec W(t),\qquad%
\vec X(0)%
=\vec X_0,
\end{equation}
where $\tvecf=\vec f+\tstep \vec f_1$ and $\tG=G+\tstep G_1$
for some $\vec f_1\colon\real^d \to \real^d$ and $G_1\colon
\real^d\to\real^{d\times m}$. This reduces to \cref{eq:sde}
with $h=0$, and $\vec f_1$ and $G_1$ describe the correction
in the drift and diffusion needed to achieve
\cref{eq:aa}. Our results concern SDEs and numerical
integrators where the second-order modified equation is
available. Except in special cases (e.g., if $G$ is
independent of $\vec X$), this does not include the
Euler--Maruyama method \citep{MR2214851}. It does include
the Milstein method, which has a second-order modified
equation \citep{MR2783188}.  Using weak-approximation theory
and modified equations, we develop an alternative method of
analysis for MLMC in this paper.  {By doing this, we no
longer depend {directly} on the strong-approximation properties of the
integrator (as in other papers, e.g. \citep{MR2436856}) and
this gives greater freedom {in the application} of MLMC.}

We {focus} on a class of integrators for an
important model in molecular dynamics and atmospheric
dispersion, the {\em Langevin equation}:
\begin{gather}
\begin{split}
  d\vec P%
  &=-\lambda \vec P\,dt%
  - \nabla V(\vec Q)\,dt%
  + \sigma\, d\vec W(t),\\
  d\vec Q%
 & =\vec P\,dt
\end{split}\label{eq:langevin}
\end{gather}
for parameters $\lambda,\sigma>0$, a potential $V\colon
\real^d \to\real$, and a $d$-dimensional vector $\vec W(t)$
of \iid Brownian motions. We specify initial conditions
$(\vec Q(0), \vec P(0))=(\vec Q_0, \vec P_0)\in\real^{2d}$.
This system is used in molecular dynamics to simulate a
system of particles in a heat bath and has equilibrium
distribution with pdf $Z^{-1}\exp(-H(\vec Q, \vec P)/ \kB
T)$, known as the \emph{Gibbs canonical distribution}, where
$Z$ is a normalisation constant, $H(\vec Q, \vec P)\coloneq
\tfrac12\vec P^\trans \vec P+V(\vec Q)$, and $\kB T
=\sigma^2/2\lambda$. As usual, $\kB$ denotes the Boltzmann
constant and $T$ temperature.  {The Langevin equation is
  also used to model the dispersion of atmospheric
  pollutants in homogeneous turbulence
  \citep{Rodean1996}. In that case, $\lambda$ is the inverse
  velocity autocorrelation time and
  $\sqrt{\sigma^2/2\lambda}$ is the strength of turbulent
  velocity fluctuations, and $d$ is equal to the number of
  space dimensions.  This is much smaller than in molecular
  dynamics applications, where $d$ is proportional to the
  number of particles. With a slight generalisation, it can
  also be used to model the dispersion in inhomogeneous
  turbulence.}

Numerical integrators for the
Langevin equation are well developed for example in
\citep{brunge84,wang2003analysis,cool}. Recently, there has
been a strong push to understand the invariant measure
associated to the integrators
\citep{kopec13:_weak_langev,MR2970763,MR2783188,MR2608370,leimkuhler14,andulle14:_long_lie,r1,MR3040887}.
Second-order modified equations are available for the most
important integrators for the Langevin equation. In
particular, we study splitting methods based on exact
sampling of an Ornstein--Uhlenbeck process and symplectic
integrators (symplectic Euler and St\"ormer--Verlet) for the
Hamiltonian part. {We show how to couple the
  different levels and apply MLMC with these methods. We
  find the use of the exact Ornstein--Uhlenbeck process is
  particularly effective when $\lambda$ is large.} %

{We also combine these new integrators with} extrapolation
\citep{talay:1990}.  It is a natural addition to MLMC
methods, already mentioned in the original work
\citep{MR2436856} and studied in more detail in
\citep{LemairePages2013}. It reduces the bias in the
numerical approximation of the solution due to time stepping
and relies on having a sharp estimate for the bias
error. If such an estimate is available, it is possible to
eliminate the leading-order error term in the bias error by
extrapolating from a sequence of approximations with
differing time-step sizes. These approximations are naturally
available in MLMC.

We provide a set of {experiments for the Langevin equation with a 
harmonic and a double-well potential, comparing integrators
based on splitting methods and extrapolation within MLMC.}
Our results confirm that the splitting methods are
significantly more effective than the Euler--Maruyama method
when combined with MLMC. All methods have the same
asymptotic $\epsilon$-cost; that is, the cost always grows
inverse proportionally to the mean-square error, but the
proportionality constant is reduced by an order of magnitude
from the standard Euler--Maruyama method through our
enhancements.

{Finally, we show how discrete random variables, as an
  approximation to the Gaussian increments of a Brownian
  motion, can be used within MLMC. This would be difficult
  to analyse by the standard analysis, since all the
  approximation results for integrators based on discrete
  random variables are in distribution only
  (e.g.,~\citep[\S14.2]{MR1214374}). In general, one must be
  careful in using discrete random variables in place of
  Gaussian random variables. The discrete approximations do not share
  the property of Gaussian random variables that the sum of two
  independent increments is itself an increment from the same
  distribution and hence the telescoping sum property, which is key to the
  standard MLMC idea, no longer holds. However, for a
  practical range of parameters in small-noise problems, the
  extra bias introduced is small and easy to estimate. Accepting this
  extra bias can lead to a significant improvement in efficiency,
  since discrete random variables allow the
  exact evaluation of the expected value on the coarsest level. The cost
  of this direct evaluation grows exponentially with the
  number of time steps, but it requires no sampling and, for a
  small number of time steps, its cost is significantly
  smaller than that of a Monte Carlo estimator. To analyse this
  method, we prove a new complexity theorem that allows for
  extra bias to be introduced between levels in MLMC. 

}

The paper is organised as follows. \cref{sec:mlmc} reviews
MLMC, including the important complexity
theorem. \cref{sec:anal} uses modified equations to apply
the complexity theorem, depending only on weak convergence
of the integrators. \cref{sec:geom} reviews splitting
methods for the Langevin equation and defines a number of
integrators where modified equations are available.
Numerical experiments are presented in \cref{sec:num_exp} to
demonstrate the effectiveness of this methodology for the
Langevin equation and to give quantitative predictions of
the possible gains.  {A final section considers approximation
of the Gaussian increments by discrete random variables and
highlights the potential gains this can bring.}
The C++ source code that we developed for the numerical experiments is freely available for download under the LGPL 3 license.

\section{Background on MLMC}\label{sec:mlmc}

When solving an SDE numerically, the total error consists of the
bias due to the time-stepping method and the Monte Carlo sampling
error. The sum of these two terms should be reduced below a given
small tolerance $\epsilon$. A standard Monte Carlo method
achieves this by computing $\Np$ sample paths, with
$\Np^{-1}=\order{\epsilon^{2}}$, and taking time step
$h=\mathcal{O}\bigl(\epsilon^{1/\alpha}\bigr)$, where $\alpha$ is the order of
weak convergence (e.g., $\alpha=1$ for the Euler--Maruyama
method). Hence, we can achieve accuracy $\epsilon$ with total
cost $\Cost{MC}(\epsilon)=\order{h^{-1}\times
  \Np}=\mathcal{O}\bigl(\epsilon^{-(2+1/\alpha)}\bigr)$. In contrast, MLMC uses a
series of coarse levels with larger time steps to construct an
estimator. 
{If the strong order of convergence of the employed
  integrator is one, MLMC reduces} the cost of the method to
$\Cost{MLMC}(\epsilon)=\order{\Np}=\order{\epsilon^{-2}}$,
which is the lower limit for 
a Monte Carlo method.

While MLMC is more efficient than standard Monte Carlo in
the limit $\epsilon\rightarrow0$, the actual value of the
tolerance $\epsilon$ might be relatively large in practical
applications. Hence, not only the asymptotic rate of
convergence, but also the cost of the method for a given
$\epsilon$ is of interest.  The exact value of the constant
$C_2$ in the cost function
$\Cost{MLMC}(\epsilon)=C_2\epsilon^{-2}+\cdots$ and the size
of higher-order corrections depends on the details of the
method, such as the time-stepping scheme and the coarse-level
solver. In particular, choosing a time-stepping scheme that
becomes unstable on the coarser levels can severely limit
the performance as only a small number of levels can be
used; see \citep{arnulf:2012, AbdulleBlumenthal2013}.

Suppose that we are interested in the expectation of
$\QOI\coloneq\phi(\vec X(T))$, where $\vec{X}(T)$ is the
solution to \cref{eq:sde} at time $T$ and
$\phi\colon\real^d\to \real$ defines the quantity of interest.
Assume that the number of time steps used to discretise the
SDE is $\Nt=\Ncoarse 2^L$, where $\Ncoarse, L\in
\naturals$. Our strategy is to approximate \cref{eq:sde}
using a numerical integrator with time step $h=T/\Nt$ to
define an approximate solution $\vec X_\Nt\approx \vec
X(T)$. Then, we compute many independent samples of $\vec
X_\Nt$ to define approximate samples $\QOI^{(i)}$ of
$\QOI$. The classical Monte Carlo method approximates
$\mean{\QOI}$ by the sample average of $\QOI^{(i)}$.

Instead, the MLMC method constructs a sequence of approximations
on levels indexed by $\ell=\{L,L-1,\dots,0\}$ with
$\Nt_\ell=\Ncoarse 2^\ell$ time steps of size $h_\ell=T/\Nt_\ell$.
Let $\QOI_\ell^{(i)}$ denote independent samples of the
approximation to $\mathcal{P}$ on level $\ell$ and let
\begin{equation}
  \Phat_{\ell} 
  \coloneq \frac{1}{\Np_\ell} \sum_{i=1}^{\Np_\ell} \QOI_\ell^{(i)}
\end{equation}
denote the Monte Carlo estimator on level $\ell$ based on
$\Np_\ell$ samples.  An estimator for the finest
level where $\Nt=\Nt_L$ can be written as the telescoping sum
\begin{equation}\label{eq22}
  {\Phat^{(\MLMC)}} \equiv
  \Yhat_{\left\{\Np_\ell\right\}} %
  \coloneq 
  \sum_{\ell=0}^{L} \Yhat_{\ell,\Np_\ell}\;,\vspace{-1.5ex}
\end{equation}
 where $\Yhat_{0,\Np_{0}}\coloneq\Phat_{0}$ and
\begin{equation}\label{eq:pp}
  \Yhat_{\ell,\Np_\ell} %
  \coloneq \frac{1}{\Np_\ell} \sum_{i=1}^{\Np_\ell}
  Y_\ell^{(i)},\qquad %
  Y_\ell^{(i)}\coloneq \QOI_\ell^{(i)}-\QOI_{\ell-1}^{(i)},
  \quad\text{for $\ell \ge 1$.}
\end{equation}
 The estimator does not introduce any additional bias,
as we recover the numerical discretisation error on the
finest level (where $h=h_L$):
\begin{equation}
  \expect{\Phat^{(\MLMC)}} =
  \expect{\Yhat_{\{\Np_\ell\}}} =
  \expect{\Phat_L} =
  \expect{\Phat^{(\MC)}},
\end{equation}
where $\Phat^{(\MC)}$ is the standard Monte Carlo estimator for
$\Nt=\Nt_L$ time steps. The two key ideas of the MLMC method are
now:
\begin{itemize}
\item The number of time steps $\Nt_\ell$ is smaller on the
  coarser levels $\ell<L$. Hence, the calculation of a
  single sample $\QOI_\ell^{(i)}$ is substantially cheaper.
\item The success of the method depends on coupling the
  samples $\QOI_\ell^{(i)}$ and $\QOI_{\ell-1}^{(i)}$ so
  that the variance of $Y_\ell^{(i)} =
  \QOI_{\ell}^{(i)}-\QOI_{\ell-1}^{(i)}$ is small. By
  arranging for the variance of $Y^{(i)}_\ell $ to be small,
  a smaller number $\Np_\ell$ of samples suffices to
  construct an accurate estimator
  $\Yhat_{\ell,\Np_\ell}$. This allows the construction of a
  MLMC estimator with fixed total variance
  $\sum_{\ell=0}^L
  \operatorname{Var}\big[\Yhat_\ell\big]/\Np_\ell$ and lower
  computational cost.
\end{itemize}

This is formalised in the following complexity theorem
\citep[Theorem 3.1]{MR2436856}:
\begin{theorem}[MLMC complexity]\label{theo:ComplexityTheorem} %
  Consider a real-valued random variable $\QOI$ and estimators
  $\Phat_{\ell}$ corresponding to a numerical
  approximation based on time step $h_\ell=T/ \Nt_{\ell}$ and
  $\Np_\ell$ samples. If there exist independent estimators
  $\Yhat_{\ell,\Np_\ell}$ based on $\Np_\ell$ Monte Carlo
  samples, and positive constants $\alpha\ge\frac{1}{2}$,
  $c_1$, $c_2$, $c_3$ such that
\begin{enumerate}[\normalfont(i) ]
  \item $\vpair*{\expect{\Phat_\ell-\QOI}}\le c_1 h_{\ell}^\alpha$,
  \item $\expect{\Yhat_{\ell,\Np_\ell}}%
    =\begin{cases}\expect{\Phat_{\ell}}, & \ell=0,\\[0.5em]%
      \expect{\Phat_\ell-\Phat_{\ell-1}}, & \ell>0,\end{cases}$
  \item $\variance{\Yhat_{\ell,\Np_\ell}}\le c_2 \Np_\ell^{-1}h_\ell^2$,
    and
  \item $\Cost{MLMC}_\ell$, the computational complexity of
    $\Yhat_{\ell,\Np_\ell}$, is bounded by $c_3 \Np_\ell
    h_\ell^{-1}$,
\end{enumerate}
then there exists a positive constant $c_4$ such that for any
$\epsilon<1/e$, there are values $L$ and $\Np_\ell$ for which
$\Yhat_{\{\Np_\ell\}}$ from \cref{eq22}
has a mean-square error (MSE) with bound
\begin{equation}\label{end}
  \text{MSE} \equiv \expect{\left(\Yhat_{\{\Np_\ell\}}-\expect{\QOI}\right)^2} < \epsilon^2
\end{equation}
and a computational complexity $\Cost{MLMC}$ with bound
\begin{equation}
  \Cost{MLMC} \le c_4 \epsilon^{-2}.
\end{equation}
\end{theorem}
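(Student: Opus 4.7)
The plan is to decompose the mean-square error into squared bias plus variance, fix the number of levels $L$ from hypothesis (i) to control the bias, and then allocate $\{\Np_\ell\}$ across levels by a Lagrange optimisation of the cost (iv) subject to the variance budget imposed by (iii).

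The first step is the MSE decomposition. Because (ii) forces $\expect{\Yhat_{\{\Np_\ell\}}} = \expect{\Phat_L}$ by telescoping, the MSE splits as
\[
\expect{\left(\Yhat_{\{\Np_\ell\}}-\expect{\QOI}\right)^2}
= \left(\expect{\Phat_L}-\expect{\QOI}\right)^2
+ \variance{\Yhat_{\{\Np_\ell\}}} ,
\]
and independence across levels turns the last term into $\sum_{\ell=0}^L \variance{\Yhat_{\ell,\Np_\ell}}$. I would force each summand below $\epsilon^2/2$. Choosing $L$ to be the smallest integer with $c_1 h_L^\alpha \le \epsilon/\sqrt{2}$ handles the bias via (i); this yields $L = O(\log(1/\epsilon))$ and $h_L$ at most a constant multiple of $\epsilon^{1/\alpha}$, with the assumption $\epsilon<1/\erm$ ensuring $L\ge 1$.

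For the variance, hypothesis (iii) reduces the task to enforcing $c_2 \sum_\ell \Np_\ell^{-1} h_\ell^2 \le \epsilon^2/2$. Minimising the cost $c_3 \sum_\ell \Np_\ell h_\ell^{-1}$ from (iv) subject to this constraint is a textbook Lagrange exercise whose continuous optimum sits at $\Np_\ell \propto h_\ell^{3/2}$. I would therefore set
\[
\Np_\ell = \left\lceil 2 c_2 \epsilon^{-2} h_\ell^{3/2} \sum_{j=0}^L h_j^{1/2} \right\rceil .
\]
Since $h_\ell = h_0 2^{-\ell}$, the series $\sum_j h_j^{1/2}$ is a convergent geometric progression bounded independently of $L$, so inserting this back into (iv) yields a main cost contribution of order $\epsilon^{-2}\left(\sum_\ell h_\ell^{1/2}\right)^2 = O(\epsilon^{-2})$, and the variance constraint is satisfied by construction.

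The main obstacle, and the only place where the hypothesis $\alpha \ge 1/2$ really bites, is the integer-rounding correction from the ceiling above. Each ceiling adds at most one sample per level, contributing an extra $c_3 \sum_\ell h_\ell^{-1} = O(h_L^{-1}) = O(\epsilon^{-1/\alpha})$ to the cost, which is absorbed into $c_4 \epsilon^{-2}$ precisely when $1/\alpha \le 2$. Aside from this careful bookkeeping of the rounding overhead, the whole argument reduces to a variance-budget Lagrangian on top of a geometrically refined sequence of step sizes.
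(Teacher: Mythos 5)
Your proposal is correct and follows essentially the same route as the standard proof of this theorem (Giles's Theorem 3.1, which the paper cites rather than reproduces, and which the paper mirrors when proving its modified complexity theorem): split the MSE into squared bias plus variance, fix $L$ from the bias bound (i), take $\Np_\ell\propto h_\ell^{3/2}$ from the Lagrangian trade-off between (iii) and (iv), and absorb the $O(\sum_\ell h_\ell^{-1})=O(\epsilon^{-1/\alpha})$ rounding overhead using $\alpha\ge 1/2$. The only quibble is that $\epsilon<1/\erm$ is not what guarantees $L\ge 1$ (that depends on $c_1$, $T$ and $\Ncoarse$), but the argument goes through regardless of whether $L=0$ or $L\ge 1$, so nothing is lost.
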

 
The theorem can be extended to allow the variance to decay
as $\var{\Yhat_\ell}\le c_2 \Np_\ell^{-1}h_\ell^\beta$
\citep{MR2436856}. For all cases in this paper, $\beta=2$
and the cost is concentrated on the coarsest level (as we
see from \cref{alg} line \ref{eqn:Nestimator} and (iii) above). The
asymptotic dependence of the computational complexity on
$\epsilon$ is independent of the weak order of convergence
$\alpha$ of the time-stepping method. However, the constant
$c_4$ does depend on the particular time-stepping method.

To obtain the results in this paper, we used Algorithm
\ref{alg:MLMC} and our choices for the numbers of samples
$\Np_\ell$ on each of the levels are defined
{adaptively via $\Np_\ell^+$ using the sample
  variance following \citep{MR2436856}}.  Given a tolerance
$\epsilon_{\max}>0$, the algorithm gives an MLMC estimator
$\Phat^{\text{(MLMC)}}$ with mean-square error $\epsilon$ in
the range $\epsilon_{\max}/2 < \epsilon < \epsilon_{\max}$
as defined in
\cref{end}.
\begin{algorithm}
  \caption{Multilevel Monte Carlo. Input: 
    $\epsilon_{\max}$, $\Ncoarse$,  and $T$. Output:
    Estimator $\Phat^{(\MLMC)}$\label{alg:MLMC}}\label[algorithm]{alg}
 \begin{algorithmic}[1]
   \STATE Choose $L, \Ncoarse$ such that, on the finest level
   with $\Nt_L=2^L\Ncoarse$ time steps of size $h_L=T/\Nt_L$, the
   bias $\epsilon_{\operatorname{bias}}$ is smaller than
   $\epsilon_{\max}/\sqrt{2}$. Define $\epsilon\equiv
   \sqrt{2}\epsilon_{\operatorname{bias}}$.  
   \STATE Choose a minimum number of samples $\Np_{\min}$ (say
   $100$ or $1000$).  
   \STATE Set $\Np_\ell^-=1$,
   $\Np_\ell^+=\Np_{\min}$, $\Np_{\ell}=0$ for all levels $\ell$.
   \WHILE{$\Np_\ell<\Np_\ell^+$ for some level $\ell$} \FOR
   {$\ell=L,\dots,0$} \FOR
   {$i=\Np_\ell^-,\dots,\Np_\ell^+$} 
   \STATE Calculate
   $Y_\ell^{(i)}$ by applying the numerical integrator on
   levels $\ell$ and $\ell-1$ {(except for $\ell=0$) for
     sample $i$. The two trajectories
   should be coupled (see~\cref{ss:c}), but $Y_\ell^{(i)}$ should be independent of 
   any other sample (i.e., of $Y_{\ell'}^{(i')}$ for
   $\ell'\not=\ell$ or $i'\ne i$).}
   \STATE $\Np_\ell \mapsto \Np_\ell+1$.
      \ENDFOR 
      \STATE       \label{eq:algEstimators} Update estimators for the bias and variance:
      \[
          \Yhat_{\ell,\Np_\ell} = \frac{1}{\Np_\ell}\sum_{i=1}^{\Np_\ell} Y_\ell^{(i)},\quad
          \Vhat_{\ell,\Np_\ell} = \frac{1}{\Np_\ell-1}\left[
            \sum_{i=1}^{\Np_\ell}\left(Y_\ell^{(i)}\right)^2
            - \frac{1}{\Np_\ell}\left(\sum_{i=1}^{\Np_\ell} Y_\ell^{(i)}\right)^2
          \right].
      \]
      \STATE $\Np_{\ell}^-=\Np_{\ell}^++1$.
      \STATE \label{eqn:Nestimator} Calculate the optimal $\Np_\ell^+$ according to formula (12) in \citep{MR2436856}:
      \[
        \Np_\ell^+ =
        \ceil*{2\epsilon^{-2}\sqrt{\Vhat_{\ell,\Np_\ell}
            h_\ell}\left(\sum_{j=0}^L\sqrt{\Vhat_{j,\Np_j}/h_j}\right)}.%
              \]
    \ENDFOR
  \ENDWHILE
  \STATE Return estimator $\Phat^{(\MLMC)} = \Yhat_{\{\Np_\ell\}} \equiv \sum_{\ell=0}^L \Yhat_{\ell,\Np_\ell}$ 
 \end{algorithmic}
\end{algorithm}


\section{Applying the complexity theorem}\label{sec:anal}

Our goal is to apply the complexity theorem to numerical
integrators using only weak-approximation properties of the
numerical methods. The
complexity theorem makes assumptions on (i) the bias, (ii) the
consistency of the estimators, and (iii) the variance of the
corrections. (i) can be understood from existing weak-convergence
analysis. Let $\Cpinfty(\real^d)$ be the set of infinitely
differentiable functions $\real^d\to \real$ such that all
derivatives are polynomially bounded.
\begin{definition} For a time step $h>0$, let $\vec X_n$ be a
  $\real^d$-valued random variable that approximates the solution
  $\vec X(t)$ to \cref{eq:sde} at time $t=nh$.  We say 
  $\vec X_n$ is a \emph{weak order-$\alpha$} approximation if, for all
  $\phi\in\Cpinfty(\real^d)$ and $T>0$, there exists $K>0$
  such that for $h$ sufficiently small
  \[
  \abs{ \mean{\phi(\vec X(T))}%
    -\mean{\phi(\vec X_\Nt)}} %
  \le K h^\alpha,\qquad \Nt h =T.
  \] 
\end{definition}
There are many integrators that provide weak order-$\alpha$
approximations for $\alpha=1$ or $\alpha=2$ (e.g.,
\cite{MR1214374} or \cref{sec:geom}). In the case that
$\mathcal{P}=\phi(\vec X(T))$, $T=\Nt h_\ell$, and
$\Phat_\ell=\phi(\vec X_\Nt)$ for an $\vec X_\Nt$ with step
$h_\ell$ that is weak $\alpha$-order, the bias condition (i)
holds.

The consistency of the estimators (ii) is an easy consequence of
the linearity of integration and \cref{eq:pp}.

Condition (iii) on the variance of corrections normally
follows from the mean-square convergence of the integrator
\citep{MR2436856}. Mean-square convergence measures the
approximation of individual sample paths of the solution
$\vec X(t)$ and hence is a tool for understand the coupling
of successive levels. In this paper, we use an alternative
method based on weak-approximation theory and derive
condition (iii) as a consequence of the existence of a
second-order modified equation. To do this, we introduce the
following doubled-up system for $\vec Z=[\vec X, \vec
Y]\in\real^{2d}$:
\begin{gather}
  \begin{split}
  d\vec X&=\vec f(\vec X)\,dt+ G(\vec X)\,d\vec W(t), \qquad \vec
X(0)=\vec X_0\in\real^d,\\
  d\vec Y&=\vec f(\vec Y)\,dt+ G(\vec Y)\,d\vec W(t), \qquad\; \vec
Y(0)=\vec X_0.
\end{split}\label{eq:du}%
\end{gather}
The same initial data is applied and the same $\vec W(t)$
drives both components and so $\vec X(t)=\vec Y(t)$ a.s. for
$t>0$. We now have two copies of $\vec X(t)$ and we
approximate each differently.  Formally, we approximate
$\vec X(t)$ and $\vec Y(t)$ by different numerical
integrators with step $h>0$ and denote the resulting
approximation to $\vec Z(t_n)$ by $\vec Z_n=[\vec X_n, \vec
Y_n]$ at $t_n=n h$. In MLMC, there is usually one integrator
applied with time steps $\tstep$ for $\vec X$ and $\tstep/2$
for $\vec Y$ (which is a little awkward for $\vec Y_n$, as
one increment of $n$ corresponds to two steps of the
underlying integrator).  The joint distribution of the $\vec
X_n$ and $\vec Y_n$ contains all the required information
about the coupling of the approximations of each component
and, as we now show, a weak-convergence analysis of the
system gives condition (ii).

For simplicity, we start by assuming that $\vec Z_n=[\vec X_n,
\vec Y_n]$ is a weak second-order approximation to $\vec
Z(t)=[\vec X(t), \vec Y(t)]$. Then, we can prove the following.

\begin{theorem}\label{maint1} Fix $T>0$ and let $\mathcal{P}=\phi(\vec
  X(T))$ for a $\phi\in \Cpinfty(\real^d)$. Suppose that
  $\vec Z_n$ is a weak second-order approximation to $\vec
  Z(t)$. Conditions (i)--(iii) of
  \cref{theo:ComplexityTheorem} hold with
  $[\QOI_\ell^{(i)},\QOI_{\ell-1}^{(i)}]$ given by \iid
  samples of $[\phi(\vec X_\Nt), \phi(\vec Y_\Nt)]$ with
  $h=h_\ell$ and $\Nt h=T$.
\end{theorem}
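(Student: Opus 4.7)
The plan is to verify each of the three hypotheses of \cref{theo:ComplexityTheorem} directly from the weak second-order approximation of the coupled process $\vec Z_n$ to $\vec Z(t)$, exploiting the crucial fact that $\vec X(t)=\vec Y(t)$ a.s.\ in the true doubled-up system. The heart of the argument is the clever choice of a test function built from $\phi$ that captures the squared coupling error, which would be the main obstacle to overcome without the doubled-up formulation.

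First I would dispatch condition (i). The hypothesis that $\vec Z_n$ is a weak second-order approximation to $\vec Z(t)$ implies, by restricting to test functions on $\real^{2d}$ that depend only on the first $d$ coordinates (or only on the last $d$), that each marginal $\vec X_n$ and $\vec Y_n$ is itself a weak second-order approximation to $\vec X(t)=\vec Y(t)$. Since $\phi\in\Cpinfty(\real^d)$, this immediately gives $\abs{\expect{\phi(\vec X_\Nt)}-\expect{\phi(\vec X(T))}}\le c_1 h^2$ and likewise for $\vec Y_\Nt$, so (i) holds with $\alpha=2\ge 1/2$. Condition (ii) is immediate: $\Yhat_{\ell,\Np_\ell}$ is the sample mean of $\Np_\ell$ i.i.d.\ copies of $Y_\ell^{(i)}=\phi(\vec X_\Nt)-\phi(\vec Y_\Nt)$, so by linearity of expectation $\expect{\Yhat_{\ell,\Np_\ell}}=\expect{\phi(\vec X_\Nt)}-\expect{\phi(\vec Y_\Nt)}=\expect{\Phat_\ell-\Phat_{\ell-1}}$, since the marginal laws of $\vec X_\Nt$ and $\vec Y_\Nt$ coincide with those produced on the respective MLMC levels.

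The substantive step is condition (iii). Since $\Yhat_{\ell,\Np_\ell}$ averages $\Np_\ell$ i.i.d.\ copies of $Y_\ell^{(1)}$, we have $\variance{\Yhat_{\ell,\Np_\ell}}=\Np_\ell^{-1}\variance{Y_\ell^{(1)}}\le \Np_\ell^{-1}\expect{(\phi(\vec X_\Nt)-\phi(\vec Y_\Nt))^2}$, so it suffices to bound the second moment by $c_2 h^2$. The key trick is to introduce $\psi\colon\real^{2d}\to\real$ defined by $\psi(\vec x,\vec y)\coloneq(\phi(\vec x)-\phi(\vec y))^2$. Since $\phi\in\Cpinfty(\real^d)$, $\psi\in\Cpinfty(\real^{2d})$ (smoothness and polynomial growth of all derivatives are preserved under squaring and subtraction). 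Applying the weak second-order hypothesis to the joint process $\vec Z_n$ with this test function yields
\[
\abs[\big]{\expect{\psi(\vec Z_\Nt)}-\expect{\psi(\vec Z(T))}}\le K h^2.
\]
Now comes the decisive observation: the true solutions $\vec X(T)$ and $\vec Y(T)$ are driven by the same Brownian path from the same initial condition, so $\vec X(T)=\vec Y(T)$ almost surely, and therefore $\expect{\psi(\vec Z(T))}=0$. Consequently $\expect{(\phi(\vec X_\Nt)-\phi(\vec Y_\Nt))^2}=\expect{\psi(\vec Z_\Nt)}\le K h^2$, giving (iii) with $c_2=K$.

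The main subtlety I anticipate is simply ensuring that $\psi$ genuinely lies in $\Cpinfty(\real^{2d})$ so that the weak-order estimate applies, and verifying that the marginal distributions of $\vec X_\Nt$ and $\vec Y_\Nt$ under the coupling really do coincide with those of the single-level approximations used to define $\Phat_\ell$ and $\Phat_{\ell-1}$ (so that (ii) is a genuine consistency statement). Both points are routine under the stated assumptions; the real content of the theorem is the replacement of mean-square (strong) convergence analysis by weak convergence applied to $\psi$ on the doubled-up system.
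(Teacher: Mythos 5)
Your proposal is correct and follows essentially the same route as the paper: both arguments obtain (i) and (ii) from the marginal weak order and linearity, and both establish (iii) by applying the weak second-order estimate on the doubled-up system to the test function $(\phi(\vec x)-\phi(\vec y))^2\in\Cpinfty(\real^{2d})$ and then using $\vec X(T)=\vec Y(T)$ a.s.\ to kill the exact-solution term. The only difference is cosmetic (the paper names $\psi=\phi(\vec X)-\phi(\vec Y)$ and works with $\psi^2$, whereas you define $\psi$ as the square directly).
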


\begin{proof}
  The condition on $\vec Z_n$ implies also that $\vec X_n$
  and $\vec Y_n$ are weak second-order approximations to
  $\vec X(t)$.  Then, by the above discussion, conditions
  (i) with $\alpha=2$ and (ii) hold.

  Let $\psi(\vec Z)\coloneq\phi(\vec X)-\phi(\vec Y)$. Then
  $\psi^2 \in \Cpinfty(\real^{2d})$ since $\phi$ and hence
  $\psi^2$ are smooth and their derivatives are polynomially
  bounded. As $\vec Z_n$ is a weak second-order
  approximation to $\vec Z(t)$,
  \[
  \mean{\psi(\vec Z_\Nt)^2-\psi(\vec Z(T))^2}=\order{h^2}.
  \] 
  By definition of $\psi$,
  \begin{equation}\label{eq:b}
    \mean{\abs{\phi(\vec X_\Nt)-\phi(\vec Y_\Nt)}^2%
      -\abs{\phi(\vec X(T))-\phi(\vec Y(T))}^2}=\order{h^2}.
  \end{equation}
  Using the fact that $\vec X(t) = \vec Y(t)$ a.s., we have  \[
  \mean{\abs{\phi(\vec X_\Nt)-\phi(\vec Y_\Nt)}^2}=\order{h^2}.
  \] 
  Written in terms of $\mathcal{P}_\ell$ and
  $\mathcal{P}_{\ell-1}$, this means
\[
\var\bp{\mathcal{P}_\ell-\mathcal{P}_{\ell-1}}%
\le \mean{\pp{\mathcal{P}_\ell-\mathcal{P}_{\ell-1}}^2}%
=\order{h^2}.
  \] 
  In other words, the variance of each sample of the
  coarse--fine correction is order $\tstep^2$. This implies
  that the sample average $\Yhat_{\ell,\Np_\ell}$ of $\Np_\ell$
  \iid samples satisfies condition (iii) of
  \cref{theo:ComplexityTheorem}.
\end{proof}
\subsection{Modified equations}
The above argument does not apply to weak first-order
accurate methods, even though the complexity theorem only
requires $\alpha>1/2$. In this case, we use the theory of
modified equations to extend the analysis. A modified
equation is a small perturbation of the original SDE that
the numerical method under consideration approximates more
accurately. For the theory, we need a second-order modified
equation for the doubled-up system and this contains
second-order information about the coupling of the fine and
coarse levels. In particular, we consider modified equations
for the double-up system \eqref{eq:du} of the form:
\begin{gather}
  \begin{split}
    d\vec X_h&=\bp{\vec f(\vec X_h)+h \vec f_1(\vec
      X_h)}\,dt%
    + \bp{G(\vec X)+h G_1(\vec X_h)}\,d\vec W(t), \qquad
    \vec X(0)%
    =\vec X_0,\\
    d\vec Y_h&=\bp{\vec f(\vec Y_h)%
      +h \vec f_2(\vec Y_h)}\,dt%
    + \bp{G(\vec Y_h)+h G_2(\vec Y_h)}\,d\vec W(t), \qquad
    \vec Y(0)=\vec X_0,
\end{split}\label{eq:modeq}%
\end{gather}
for $\vec f_i\colon \real^d\to\real^d$ and $\vec
G_i\colon\real^d\to\real^{d\times m}$ for $i=1,2$. (This
could be extended to allow $\vec f_i, G_i$ to depend on both $\vec
X_h$ and $\vec Y_h$.) When the same integrator is used for
each component, but with time steps $h$ and $h/2$, it must
hold that $\vec f_2=\vec f_1/2$ and $G_2=G_1/2$. 
We  show in \cref{maint} that, subject to regularity conditions on the
coefficients, the MLMC complexity theorem applies if a
second-order modified equation exists and therefore MLMC works
with $\order{\epsilon^{-2}}$ complexity.

The additional difficulty is that $\vec X_h\ne \vec Y_h$ and
we must estimate the variance of $\phi(\vec X_h)-\phi(\vec
Y_h)$. We use a mean-square analysis and the following
lemma, which gives a first-order $L^2(\Omega,\real^d)$ bound
on $\vec Z(t)-\vec Z_h(t)$.  The lemma requires a number of
regularity assumptions on the coefficients of the modified
equation, which hold, for example, if $\vec f,\vec f_i$ and
$G,G_i$ are globally Lipschitz continuous.

\begin{lemma}\label[lemma]{lemma:1}
  For $t\in [0,T]$, let $\vec Z(t)$ satisfy the Ito SDE
  \eqref{eq:du} and $\vec Z_h(t)=[\vec X_h(t), \vec Y_h(t)]$
  satisfy the modified equation \eqref{eq:modeq}.  Suppose
  that
\begin{enumerate}[\normalfont(i) ]
\item   $\vec f\colon \real^d\to \real^d$ and $G\colon \real^d\to
  \real^{d\times m}$ are globally Lipschitz continuous with
  Lipschitz constant $L>0$.
\item There exists $C_1>0$ such that, for all $h>0$
  sufficiently small,
  \begin{align*}
    \mean{\norm{\vec f_1(\vec X_h(s))}^2},%
    \mean{\norm{G_1(\vec X_h(s))}_\rmF^2}%
    &\le C_1,\\
    \mean{\norm{\vec f_2(\vec Y_h(s))}^2},%
    \mean{\norm{G_2(\vec Y_h(s))}_\rmF^2}%
    &\le C_1,\qquad s\in [0,T],
  \end{align*}
  where $\norm{\cdot}_\rmF$ denotes the Frobenius norm.
\end{enumerate}
Then, if $\psi\colon \real^{2d}\to \real$ is globally Lipschitz
continuous, we have, for some constant $C_2>0$ independent of
$h$, \[
\mean{\abs{\psi(\vec Z(t))-\psi(\vec Z_h(t))}^2}\le C_2
h^2,\qquad\text{ for $t\in [0,T]$.}
\]
\end{lemma}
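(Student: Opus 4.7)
The plan is to reduce the statement about $\psi$ to an $L^2$ bound on the trajectory difference, and then estimate the latter via Ito's isometry and Gronwall. Since $\psi$ is globally Lipschitz with some constant $L_\psi$, we have
\[
\mean{\abs{\psi(\vec Z(t))-\psi(\vec Z_h(t))}^2}
\le L_\psi^2 \mean{\norm{\vec Z(t)-\vec Z_h(t)}^2},
\]
so it suffices to show $\mean{\norm{\vec Z(t)-\vec Z_h(t)}^2}=\order{h^2}$ uniformly on $[0,T]$. Because the $\vec X$ and $\vec Y$ components of \eqref{eq:du} and \eqref{eq:modeq} evolve independently (they are coupled only through the shared Brownian motion and identical initial data, and the coefficients of each component depend on that component alone), it is enough to establish $\mean{\norm{\vec X(t)-\vec X_h(t)}^2}=\order{h^2}$ and the analogous bound for $\vec Y$; the arguments are identical.

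For the $\vec X$ component, set $\vec E(t)\coloneq \vec X(t)-\vec X_h(t)$ and subtract the integral forms of \eqref{eq:du} and \eqref{eq:modeq} to obtain
\[
\vec E(t)=\int_0^t\bp{\vec f(\vec X(s))-\vec f(\vec X_h(s))-h\vec f_1(\vec X_h(s))}\,ds
+\int_0^t\bp{G(\vec X(s))-G(\vec X_h(s))-h G_1(\vec X_h(s))}\,d\vec W(s).
\]
Taking squared norms, applying $\norm{a+b}^2\le 2\norm{a}^2+2\norm{b}^2$, using the Cauchy--Schwarz inequality on the Lebesgue integral and Ito's isometry on the stochastic integral, and then splitting the integrand with $\norm{a+b}^2\le 2\norm{a}^2+2\norm{b}^2$ again, I get
\[
\mean{\norm{\vec E(t)}^2}
\le C\int_0^t\bp{\mean{\norm{\vec f(\vec X(s))-\vec f(\vec X_h(s))}^2}+\mean{\norm{G(\vec X(s))-G(\vec X_h(s))}_\rmF^2}}\,ds
+C h^2\int_0^t\bp{\mean{\norm{\vec f_1(\vec X_h(s))}^2}+\mean{\norm{G_1(\vec X_h(s))}_\rmF^2}}\,ds,
\]
for a constant $C$ depending on $T$ only. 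The global Lipschitz assumption (i) bounds the first two integrands by $2L^2\mean{\norm{\vec E(s)}^2}$, and assumption (ii) bounds the terms in the second integral by $2C_1$, giving
\[
\mean{\norm{\vec E(t)}^2}\le C'\int_0^t\mean{\norm{\vec E(s)}^2}\,ds+C'' h^2,\qquad t\in[0,T],
\]
for constants $C',C''$ independent of $h$.

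Gronwall's inequality then yields $\mean{\norm{\vec E(t)}^2}\le C''e^{C'T}h^2$, and the same argument for the $\vec Y$ component followed by the Lipschitz reduction above delivers the desired bound with $C_2\coloneq 2L_\psi^2 C'' e^{C'T}$. The only technical nuisance is keeping track of constants through the three applications of the elementary inequality $\norm{a+b}^2\le 2\norm{a}^2+2\norm{b}^2$ together with Cauchy--Schwarz and Ito's isometry; once one writes the splitting carefully, the Lipschitz hypothesis absorbs the $\vec f,G$ terms into a Gronwall-type integrand, and the modified-equation corrections $\vec f_i,G_i$ contribute only the advertised $h^2$ forcing through assumption (ii). No step is genuinely delicate, but this bookkeeping is where one must be most attentive.
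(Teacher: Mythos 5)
Your proof is correct and follows essentially the same route as the paper's: subtract the integral forms, use the global Lipschitz bound on $\vec f,G$ together with assumption (ii) on $\vec f_1,G_1$, apply the Ito isometry and Jensen/Cauchy--Schwarz to get a Gronwall-type inequality with an $\order{h^2}$ forcing term, and finish with the Lipschitz property of $\psi$. No gaps.
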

\begin{proof}
  This is an elementary calculation with the Gronwall inequality
  and Ito isometry. See \cref{useful_lem}.
\end{proof} 
We are now able to state and prove the main theorem of this
article. {In contrast to \cref{maint1}, $\phi$ is
  assumed to be Lipschitz here.}

\begin{theorem}\label{maint} Fix $T>0$. Let  $\phi\in
  \Cpinfty(\real^d)$ be globally Lipschitz continuous. Suppose
  that
  \begin{enumerate}[\normalfont(i) ]
  \item $\vec X_n$ and $\vec Y_n$ are weak order-$\alpha$
    approximations to $\vec X(t)$ for some $\alpha>1/2$,
  \item $\vec Z_n$ are second-order weak approximations to $\vec
    Z_h(t)$, and
  \item the assumptions of \cref{lemma:1} hold.
  \end{enumerate}
  Then Conditions (i)--(iii) of \cref{theo:ComplexityTheorem}
  hold with $[\QOI_\ell^{(i)},\QOI_{\ell-1}^{(i)}]$ given by \iid
  samples of $[\phi(\vec X_\Nt), \phi(\vec Y_\Nt)]$ with $h=h_\ell$.
\end{theorem}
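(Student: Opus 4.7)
The plan is to verify conditions (i)--(iii) of \cref{theo:ComplexityTheorem} one at a time, mirroring the structure of the proof of \cref{maint1} but now using the modified equation \eqref{eq:modeq} as an intermediate object, since the almost-sure identity $\vec X_h(t)=\vec Y_h(t)$ used in \cref{maint1} no longer holds at the modified-SDE level.

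Condition (i) is read off directly from hypothesis (i): since $\phi\in\Cpinfty(\real^d)$ and $\vec X_n$ is a weak order-$\alpha$ approximation to $\vec X(t)$ with $\alpha>1/2$, the definition of weak order gives $\abs{\mean{\Phat_\ell-\QOI}}\le c_1 h_\ell^\alpha$, which is exactly (i) of the complexity theorem. Condition (ii) follows immediately from the linearity of expectation applied to the telescoping definition \eqref{eq:pp}, exactly as in \cref{maint1}.

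The content of the theorem lies in condition (iii). I would set $\psi(\vec Z):=\phi(\vec X)-\phi(\vec Y)$, so that $\psi,\psi^2\in\Cpinfty(\real^{2d})$, and moreover $\psi$ is globally Lipschitz because $\phi$ is. Applying assumption (ii) (weak second-order approximation of $\vec Z_n$ to $\vec Z_h(t)$) to the test function $\psi^2$ yields
\[
\mean{\psi(\vec Z_\Nt)^2}=\mean{\psi(\vec Z_h(T))^2}+\order{h^2}.
\]
For the remaining term $\mean{\psi(\vec Z_h(T))^2}$, I would invoke \cref{lemma:1} with this Lipschitz $\psi$, which is permitted by hypothesis (iii), to obtain $\mean{\abs{\psi(\vec Z(T))-\psi(\vec Z_h(T))}^2}\le C_2 h^2$. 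Since the exact doubled-up system \eqref{eq:du} satisfies $\vec X(T)=\vec Y(T)$ a.s., we have $\psi(\vec Z(T))=0$ a.s., so $\mean{\psi(\vec Z_h(T))^2}\le C_2 h^2$. Combining the two displays gives $\mean{\bp{\phi(\vec X_\Nt)-\phi(\vec Y_\Nt)}^2}=\order{h^2}$, which bounds the variance of a single sample $Y_\ell^{(i)}$ by the second moment; averaging over $\Np_\ell$ independent samples then yields condition (iii), namely $\variance{\Yhat_{\ell,\Np_\ell}}\le c_2 \Np_\ell^{-1} h_\ell^2$.

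The main obstacle, in contrast to \cref{maint1}, is the absence of a pathwise identity between the two components of $\vec Z_h(t)$: the $\psi^2$ trick alone only transfers the modified-equation variance to the numerical variance, so one needs a separate argument to bound the modified-equation variance itself. \cref{lemma:1} is precisely the tool that closes this gap, and this is why the Lipschitz assumption on $\phi$ (not required in \cref{maint1}) and the regularity assumptions on $\vec f,G,\vec f_i,G_i$ enter here.
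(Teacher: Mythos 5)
Your proposal is correct and follows essentially the same route as the paper: the same $\psi^2$ test-function trick applied to the weak second-order approximation of $\vec Z_n$ to $\vec Z_h(t)$, combined with \cref{lemma:1} and the almost-sure identity $\vec X(T)=\vec Y(T)$ (so $\psi(\vec Z(T))=0$) to bound $\mean{\psi(\vec Z_h(T))^2}$ by $\order{h^2}$. The only cosmetic difference is that the paper writes $\phi(\vec X_h(T))-\phi(\vec Y_h(T))$ as $\psi(\vec Z_h(T))-\psi(\vec Z(T))$ before invoking the lemma, whereas you note directly that $\psi(\vec Z(T))$ vanishes; these are the same computation.
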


\begin{proof}
  As before, conditions (i) and (ii) are straightforward.
  It is the third condition, which normally follows from a
  strong-approximation theory, that requires the modified
  equation.  Let $\psi(\vec Z)\coloneq\phi(\vec X)-\phi(\vec
  Y)$ and note that $\psi^2\in \Cpinfty(\real^{2d})$.  As
  $\vec Z_n$ is a second-order weak approximation to $\vec
  Z_h(t)$, we have
  \[
  \mean{\psi(\vec Z_\Nt)^2-\psi(\vec Z_h(T))^2}=\order{h^2}.
  \] 
  By definition of $\psi$,
  \begin{equation}\label{eq:b}
    \mean{\abs{\phi(\vec X_\Nt)-\phi(\vec Y_\Nt)}^2%
      -\abs{\phi(\vec X_h(T))-\phi(\vec Y_h(T))}^2}=\order{h^2}.
  \end{equation}
  Using the fact that $\vec X(t) = \vec Y(t)$ a.s.,
  \begin{equation*}
    \begin{aligned}
      \mean{\abs{\phi(\vec X_h(T))-\phi(\vec Y_h(T))}^2}%
      &= \mean{\abs{\phi(\vec X_h(T))%
          -\phi(\vec X(T))%
          +\phi(\vec Y(T))-\phi(\vec Y_h(T))}^2} \\%
      &= \mean{\abs{\psi(\vec Z_h(T))-\psi(\vec Z(T))}^2}.
    \end{aligned}
  \end{equation*}
  \cref{lemma:1} applies and the right-hand side in the last
  equation is $\order{h^2}$. Consequently,
  \begin{equation}
    \mean{\abs{\phi(\vec X_h(T))-\phi(\vec Y_h(T))}^2}
    =\order{h^2}.
    \label{eq:a}
  \end{equation}
  Together, \cref{eq:a,eq:b} imply that
  \begin{equation}
  \mean{\abs{\phi(\vec X_\Nt)-\phi(\vec Y_\Nt)}^2}=\order{h^2}.
\label{eq:c}
  \end{equation}
The remainder of the proof is the same as for \cref{maint1}.
\end{proof}

{By taking $\vec X$ to be the exact solution (i.e.,
  $\vec X_n=\vec X(t_n)$) and $\phi\colon \real^d\to \real$
  as a projection onto the $i$th coordinate, \cref{eq:c}
  implies that
  $\norm{\vec{X}(T)-\vec{Y}_\Nt}_{L^2(\Omega,\real^d)}=\order{h}$
  and hence first-order strong convergence can be proved by
  this method. This is consistent with the observation that
  the Euler--Maruyama method, which is not first-order
  strongly convergent in general, does not have a
  second-order modified equation.}

In summary, subject to smoothness conditions, if MLMC is
applied with an integrator that has a second-order modified
equation like \cref{eq:modeq} then the variance of the
coarse--fine correction is $\order{h^2}$ and the complexity
of MLMC is $\order{\epsilon^{-2}}$. Though the rate is
fixed, the complexity of MLMC depends on the specific
integrator used through the constant and, as we now show,
this leads to large variations in efficiency.

\section{Application to the Langevin equation}\label{sec:geom}
Before showing how they can be used for MLMC, we introduce
several integrators for the Langevin equation.
\subsection{Splitting methods}\label{sec:sm}

{Splitting methods} are an important class of numerical
integrators for differential equations. In the case of ODEs, they
allow the vector field to be broken down into meaningful parts
and integrated separately over a single time step, before
combining into an integrator for the full vector field. See for
example~\citep{MR2840298,MR2132573}. The Langevin equation breaks
down into the sum of a Hamiltonian system and a linear SDE for an
Ornstein--Uhlenbeck (OU) process.  Then, for a splitting method,
we define symplectic integrators for the Hamiltonian system
\begin{gather}
  \begin{split}
    \frac{d\vec Q}{dt}%
    &= \vec P,\\
    \frac{d\vec P}{dt}%
    &= -\nabla V(\vec Q).
  \end{split}\label{eq:ham}
\end{gather}
The OU process $\vec P(t)$, which satisfies
\begin{equation}
  d\vec P%
  =-\lambda \vec P\,dt%
  +\sigma \,d\vec W(t),\label{eq:ou}
\end{equation}
 can be integrated exactly and we use this fact to
define a so-called geometric integrator for \cref{eq:ou}.  It is
clear that the sum of the right-hand sides of these two systems
gives \cref{eq:langevin}.  There are a number of ways of
combining integrators of \cref{eq:ham,eq:ou} to define an
integrator of the full system.  The simplest, also known as 
the Lie--Trotter splitting, is to simulate \cref{eq:ham,eq:ou}
alternately on time intervals of length $\tstep$.  In general,
this technique can only be first-order accurate in the weak
sense. Alternatively, if the underlying integrators are second
order, we can define a second-order splitting method by applying
\cref{eq:ou} on a half step, then \cref{eq:ham} for a full step,
and finally apply again \cref{eq:ou} on a half step. This is
called the symmetric Strang splitting. {See also \citep{r1}.}

We now define specific integrators for \cref{eq:ham,eq:ou}.
\cref{eq:ham} is a separable Hamiltonian system, and the
symplectic Euler method and St\"ormer--Verlet methods
provide {simple, explicit methods} for its numerical
solution. The symplectic Euler method is first-order
accurate and the St\"ormer--Verlet method is second-order
accurate.

The solution of \cref{eq:ou} is a multi-dimensional OU process
and can be written as
\begin{equation}\label{eq:voc}
\vec P(t)%
=\erm^{-\lambda t} \vec  P(0)+\sigma \vec I(0,t),\qquad
\vec I(t_1,t_2)%
\coloneq\int_{t_1}^{t_2} \erm^{-\lambda(t_2-s)}\,d\vec W(s).
\end{equation}
Each component of $\vec I$ is \iid with mean zero and variance
\begin{equation}\label{eq:geo_var}
  \var{I_i(t_1,t_2)}%
  =\int_{t_1}^{t_2} \erm^{-2\lambda(t_2-s)}\,ds%
  = 
  \frac{1-{e}^{- 2\, \lambda\, (t_2-t_1)} }{2\, \lambda},
\end{equation} 
so that $\vec I(t_1,t_2)\sim \Nrm(\vec 0, \alpha^2_{t_2-t_1} I)$ for
$\alpha_t\coloneq \sqrt{(1-\erm^{-2\lambda t})/2\lambda}$.  This
suggests taking the following as the numerical integrator: for a
time step $\tstep>0$,
\begin{equation}\label{eq:geom}
\vec P_{n+1}%
=\erm^{-\lambda \tstep}\vec P_n %
+ \sigma \alpha_\tstep \vec\xi_n
\end{equation}
for $\vec \xi_n\sim \Nrm(0,I)$ \iid. If $\vec P_n=\vec
P(t_n)$, then $\vec P_{n+1}$ has the same distribution as
$\vec P(t_{n+1})$ and this method is exact in the sense of
distributions. Methods of this type, where the variation of
constants formula~\eqref{eq:voc} is used for the
discretisation, are often called geometric
integrators \citep{MR2491434}.  

The full equations for the first order splitting (symplectic
Euler) and second-order splitting (St\"ormer--Verlet) are
written as follows:
\paragraph{Symplectic Euler/OU} For $\vec \xi_n$ \iid with
distribution $\Nrm(\vec 0,I)$,
\begin{gather}
    \begin{split}
      \vec P^*_{n+1}%
     & =\erm^{-\lambda \tstep}\vec P_n %
      + \sigma \alpha_\tstep \vec\xi_n,\\
    \vec P_{n+1}%
    &= \vec P_{n+1}^* - h\,\nabla V(\vec Q_n),\\
    \vec Q_{n+1}%
    &= \vec Q_n + \vec P_{n+1}\,\tstep.
    \end{split}\label{eq:y}
  \end{gather}
  \paragraph{St\"ormer--Verlet/OU}For $\vec\xi_n,\vec\xi_{n+1/2}$
  \iid with distribution $\Nrm(\vec 0,I)$
\begin{gather}
    \begin{split}
      \vec P^*_{n+1/2}%
   &   =\erm^{-\lambda \tstep/2}\vec P_n %
      + \sigma \alpha_{\tstep/2} \vec\xi_n,\\
    \vec P_{n+1/2}&=\vec P^*_{n+1/2} - \frac 12 h\, \nabla V(\vec Q_n),\\
    \vec Q_{n+1}&=\vec Q_{n} + h \vec P_{n+1/2},\\
    \vec P^*_{n+1}&=\vec P_{n+1/2} - \frac 12 h\, \nabla V(\vec Q_{n+1}),\\ 
    \vec P_{n+1}%
    &  =\erm^{-\lambda \tstep/2}\vec P^*_{n+1} %
      + \sigma \alpha_{\tstep/2} \vec\xi_{n+1/2}.
\end{split}\label{eq:yy}
\end{gather}
Subject to regularity conditions on the coefficients,
\cref{eq:y} is first-order and \cref{eq:yy} second-order
accurate in the weak sense by application of the
Baker--Campbell--Hausdorff formula.

\subsection{Modified equations for the Langevin equation}
Consider the Langevin equation \eqref{eq:langevin}.
{Following \citep{MR2214851,MR2783188} by using} a
computer algebra system to verify consistency of moments to fifth
order, it is easy to find modified equations for the numerical
integrators developed in \cref{sec:sm}.  For example, for the
first-order splitting method with $d=1$, the doubled-up modified
equation is as follows: Denote by $[Q_n,P_n]$ the numerical
approximation on the coarse level (step $h$) and $[q_n,p_n]$ on
the fine level (step $h/2$). The second-order modified equation
is
\begin{gather}\label{eq:mod1}
\begin{split}
  dQ&=\bp{ P -\frac 12 h \pp{V'(Q)+\lambda P}}\,dt+
  \sigma \frac12
  h \,dW(t),\\
  dP&=\bp{ -\lambda P - V'(Q)- \frac 12 h\pp{\lambda
      V'(Q)-P V''(Q)}}\,dt + \sigma \,dW(t),\\
  dq&=\bp{ p -\frac 14 h \pp {V'(q)+\lambda p}}\,dt+
  \sigma \frac14 h \,dW(t),\\
  dp&=\bp{ -\lambda p - V'(q)- \frac 14 h\pp{\lambda
      V'(q)-p V''(q)} }\,dt + \sigma \,dW(t),
\end{split}
\end{gather}
where $W(t)$ is the same Brownian motion for $p$ and $P$. We
conclude then that this method leads to $\order{h^2}$ variances
in the coarse--fine correction, if the coefficients are
sufficiently well behaved. Identifying when the coefficients are
well behaved is hard. For example, it is sufficient that the
drift and diffusion in both the original and modified equations
are globally Lipschitz. These however are very strong conditions
and do not hold for many realistic potentials.

For the second-order splitting method (based on
St\"ormer--Verlet method and exact OU integration), we can
apply \cref{maint1} to see that the variance of the
coarse--fine corrections is $\order{h^2}$.  The
regularity condition is on the original drift and diffusion
and holds if $\nabla V\colon \real^d\to\real^d$ is
sufficiently smooth (e.g., infinitely differentiable and
Lipschitz).

\subsection{MLMC with splitting methods}\label{ss:c}

Let $\vec X=[\vec Q, \vec P]$ denote the state-space variable.  A
key step in MLMC  is computing approximations {to $\vec
X(t_{n+2})$ at $t_n=n\tstep$ given $\vec X(t_n)$ based on
integrators with time steps $\tstep$ and $2\tstep$} that are
coupled so the difference between the approximations has small
variance. For the Euler--Maruyama method, this is achieved by
choosing increments $\Delta \vec W_n, \Delta \vec W_{n+1}$ for
the computation with time step $\tstep/2$, and choosing the sum
$\Delta \vec W_n+\Delta \vec W_{n+1}$ for the corresponding interval of the
computation with time step $\tstep$. 

It is hard to sample $\vec I(0,t)$ in \cref{eq:geom} based
on increments of the particular sample path of $\vec W(t)$
and, as a method for strong approximation, it is limited. It
is easy however to sample $\vec I(0,t)$ as a Gaussian random
variable. We now show how to couple fine--coarse integrators
for the MLMC method, without the direct link to the
increment. First, note that
\begin{align*}
  \vec I(0,2h)%
  &=\int_0^{2h} \erm^{-\lambda(2h-s)}\,d\vec W(s)\\%
  &=\int_0^{h} \erm^{-\lambda(2h-s)}\,d\vec W(s)%
  +  \int_h^{2h} \erm^{-\lambda(2h-s)}\,d\vec W(s)\\
 &= r I(0,h)+I(h,2h),\qquad r\coloneq \erm^{-\lambda h}.
\end{align*}

\[
\vec I(0,h),\vec I(h,2h)\sim \Nrm\ppair*{0, \alpha_h^2 I}\; \text{\iid}.
\]

We can simulate $\vec I(0,h)$ and $\vec I(0,2h)$, by generating
$\vec \xi_i\sim \Nrm(\vec 0,I)$ \iid and computing
\[
\vec I(0,h)%
=\alpha_h\vec \xi_1,\qquad
\vec I(h,2h)%
=\alpha_h\vec \xi_2.
\]
As $\alpha_h^2=(1-r^2)/2\lambda$ and
$\alpha_{2h}^2=(1-r^4)/2\lambda$, we have
$\alpha_h^2(1+r^2)=\alpha_{2h}^2$.
Then,
\begin{equation}\label{ut}
\vec I(0,2h)%
= \alpha_{2h}\frac1{\sqrt{1+r^2}}\ppair*{r \vec \xi_1+\vec \xi_2}. %
\end{equation}
Given $\vec P_n$ at time $t_n$, we find 
$\vec  P_{n+2}$ using two time steps of size $h$ by
\begin{align*}
  \vec P_{n+1}%
  &= \erm^{-\lambda \tstep} \vec P_n %
  + \sigma \alpha_{h}\vec \xi_n\\
  \vec P_{n+2}%
  &= \erm^{-\lambda \tstep} \vec P_{n+1} %
  + \sigma \alpha_{h} \vec \xi_{n+1},
\end{align*}
for $\vec \xi_n\sim \Nrm(0,I)$ \iid.
This is equivalent to  a single time step of size $2h$ and
\[
\vec P_{n+2}%
= \erm^{-2\lambda h} \vec P_n %
+ \sigma \alpha_{2h} \vec \xi^*_n,\qquad %
\vec \xi_n^{*}%
\coloneq \frac{r \vec \xi_n+\vec \xi_{n+1}}{\sqrt{r^2+1}}.
\]
This method is used to generate {the increments when
using splitting methods within MLMC.}

\section{Numerical experiments}\label{sec:num_exp}

We developed an object-oriented C++ code to compare the performance of different numerical methods for two model problems. The modular structure of the templated code makes it easy to change key components, such as the time-stepping method or random-number distribution, without negative impacts on the performance. The source code is available under the LGPL 3 license as a git repository on \verb!https://bitbucket.org/em459/mlmclangevin!\footnote{All enquiries about the code should be addressed to \texttt{e.mueller@bath.ac.uk}.}.

Key to the choice of parameters in Algorithm \ref{alg:MLMC} is the balance
between bias error and statistical error. We assume that the bias error
has the form in \cref{theo:ComplexityTheorem}(i) for a
proportionality constant $c_1$ and that the finest time step
$h_L=T/(\Ncoarse 2^L)$. Then, for a bias error of size
$\epsilon/\sqrt 2$, we require that
\[
c_1 \pp{ \frac T{\Ncoarse 2^L}}^\alpha%
= \frac{\epsilon}{\sqrt{2}}.
\]
Given $c_1$, $\alpha,$ and $T$ as well as a choice for
$\Ncoarse$, this can be solved to determine $\epsilon$ from
$L$ or vice versa.  
The constant $c_1$ can be
approximated by assuming that $\mean{\Phat_\ell-\mathcal
  P}=\tilde c_1 h_\ell^\alpha$ for some $\tilde c_1\in
\real$, so that
\begin{align*}
  \Yhat_{\ell,\Np_\ell}%
  \approx \tilde c_1 h_\ell^\alpha-\tilde c_1 h_{\ell-1}^\alpha%
  =\tilde c_1(1-2^{\alpha})h_\ell^\alpha
\end{align*}
and calculating $c_1=\abs{\tilde c_1}$ after computing the left-hand side
numerically.

{
The following integrators are used in the numerical experiments below: 
\begin{description}
\item[EMG and EMG+] Euler--Maruyama  as
  given by \cref{eq:1} with $\Ncoarse=4$ (EMG) 
and $\Ncoarse=8$ (EMG+).

\item[SEG] First-order splitting method with symplectic
  Euler/exact OU and $\Ncoarse=4$. See
  \cref{eq:y}.
\item[SVG] Second-order splitting method with
  St\"ormer--Verlet/exact OU and $\Ncoarse=4$. See
  \cref{eq:yy}.
\end{description}}
 Richardson extrapolation is a well-known
technique for increasing the accuracy of a numerical
approximation by computing two approximations with different
discretisation parameters and taking a linear combination that
eliminates the lowest-order term for the error. Its extension to
SDEs was developed by \citep{talay:1990} and is particularly
convenient for use with MLMC, as MLMC computes approximations on
several levels {and this has already been explored in \citep{MR2436856}.} Thus, we take $\Phat_{L}$ and $\Phat_{L-1}$ and
suppose that, for some constants $\tilde c_1$ and $\alpha'>\alpha$,
\begin{align*}
  \mean{\Phat_L}&=\mean{\mathcal P}%
  +\tilde c_1 h_L^\alpha%
  + \order{h_L^{\alpha'}}\\
  \mean{\Phat_{L-1}}%
  &=\mean{\mathcal P}%
  +\tilde c_1 h_{L-1}^\alpha%
  +\order{h_L^{\alpha'}}.
\end{align*}  
A simple linear combination of the two gives a higher-order
approximation to $\mean{\mathcal P}$; in particular, for SEG, we have
$\alpha=1$ and $\alpha'=2$ and
\[
2\mean{\Phat_{L}}-\mean{\Phat_{L-1}}%
=\mean{\mathcal P}+\order{h_\ell^{2}}.
\]
An approximation to the left-hand side is given by $\Phat^{(\MLMC)}+\Yhat_{L,\Np_L}$.
For SVG, we have $\alpha=2$ and $\alpha'=4$, and
\[
\frac{1}{3}\pp{4\mean{\Phat_{L}}-\mean{\Phat_{L-1}}}%
=\mean{\mathcal P}+\order{h_\ell^{4}}.
\]
An approximation to the left-hand side is given by $\Phat^{(\MLMC)}+\frac13\Yhat_{L,\Np_L}$.
To observe the improved accuracy, the statistical error must also
be reduced to match the  bias error. An increase in accuracy from second- to fourth-order accuracy is
achieved because the integrator is symmetric.

In the experiments, we apply extrapolation in the following scenarios:

\begin{description}
\item[EMGe and EMGe+] EMG/EMG+  with 
  extrapolation, increasing the weak order of
  convergence from one to two. 
\item[SEGe] SEG  with extrapolation, again increasing the weak order of
  convergence from one to two. 
\item[SVGe] SVG with extrapolation, increasing the weak order of
  convergence from two to four. Due to the fourth-order
  convergence, it is sufficient to take large time steps, and
  we choose $L=2$ and vary $\Ncoarse$ rather
  than $L$.
\end{description}

\subsection{Langevin equation for the damped harmonic oscillator}
\label{eg:ho}
\begin{figure}
 \begin{center}
  \includegraphics[width=0.45\linewidth]{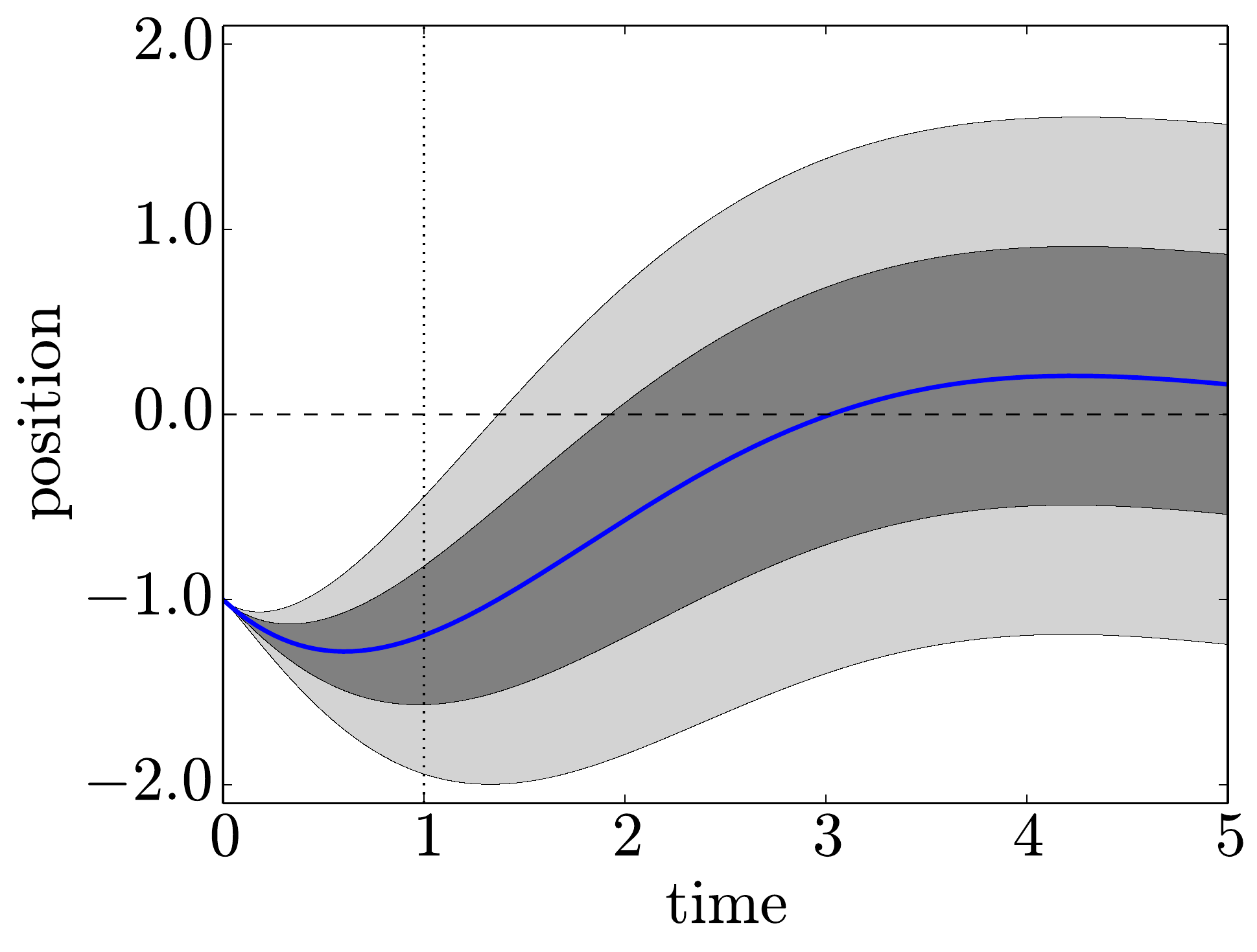} \hfill
  \includegraphics[width=0.45\linewidth]{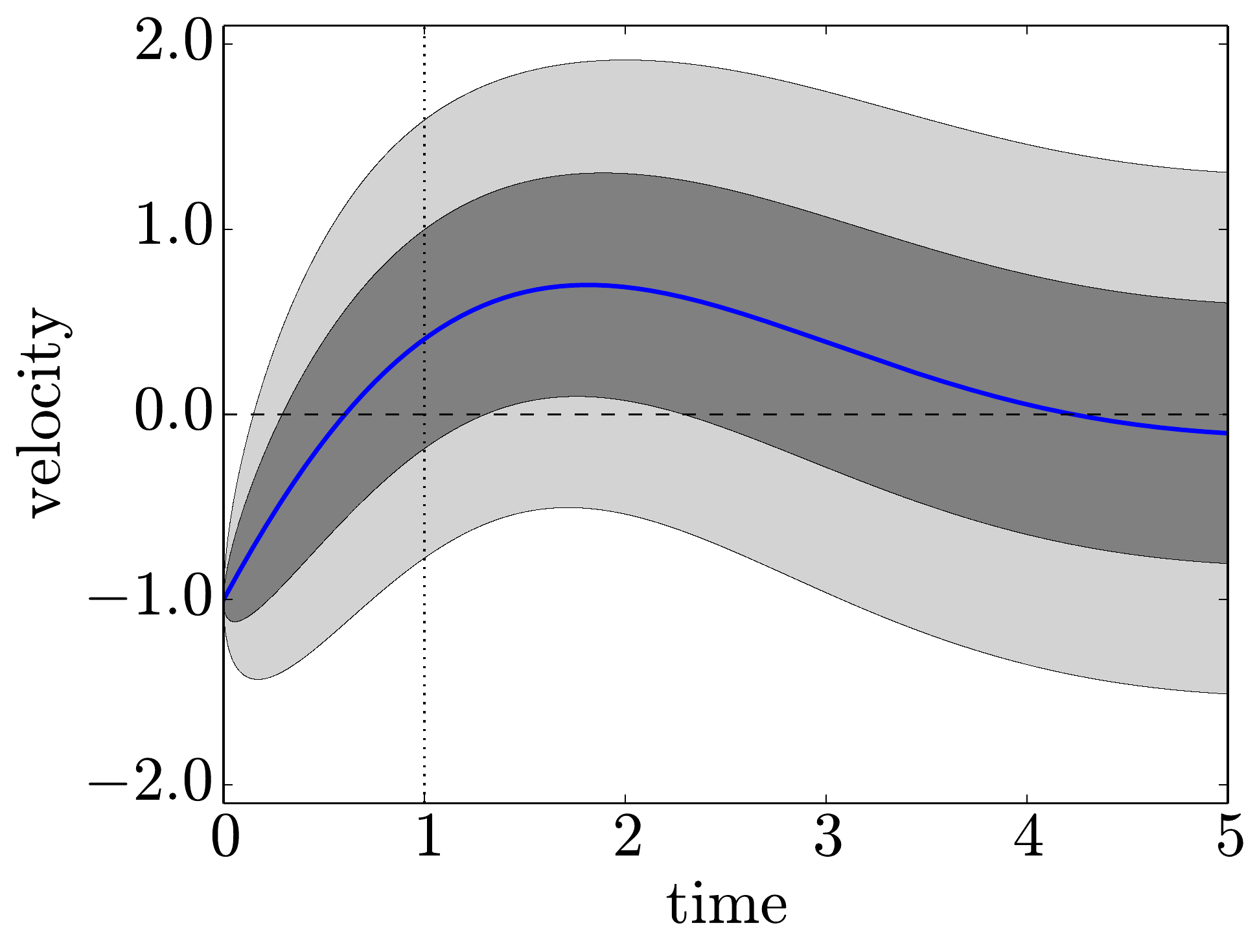}\\
  \caption{The position (left) and velocity (right) of a randomly
    forced harmonic oscillator with damping for
    $\omega_0=1=\lambda=\sigma$. The mean value is shown together
    with one (dark gray) and two standard deviations (light
    gray).}
  \label{fig:ExactSolution}
 \end{center}
\end{figure}
We first consider \cref{eq:langevin} with $d=1$ and
\begin{equation}
  V(Q) %
  = \frac{1}{2}\omega_0^2 Q^2.
\end{equation}
Physically, with this potential, \cref{eq:langevin}
describes a randomly forced harmonic oscillator with
resonance frequency $\omega_0$ and damping parameter
$\lambda$; the strength of the Gaussian forcing is given by
$\sigma$. For $\omega_0=0$ (i.e., in the absence of a
potential), the SDE can be interpreted as a model for the
dispersion of an atmospheric pollutant in a one-dimensional
turbulent velocity field (see \cite{Rodean1996}). In this
case, $\sigma^2/(2\lambda)$ is the turbulent-velocity
variance and $1/\lambda$ the velocity
relaxation-time.  
In \cref{fig:ExactSolution}, the marginal distributions for the
position and velocity are visualised as a function of
$t$ for the first set of parameters used in the numerical
experiments ($\omega_0=1=\lambda=\sigma$ and $P(0)=Q(0)=-1$).

We choose this simple example, for which we know the analytical
solution, to verify the correctness of our code and to quantify
numerical errors; exact solutions of the Langevin equation are also described in \citep{Risken1996}. As the system is linear, the joint pdf of $Q$ and $P$ is
Gaussian and is defined by their mean and
covariance. Denoting $\vec{X}(t)=(Q(t),P(t))^\trans$ and the
initial solution by $\vec{X}_0 = \vec{X}(t=0) = (Q(t=0),P(t=0))^\trans$,
we have
\begin{equation}
  \vec{X}(t)%
  = \exp\bp{-\Lambda t}\vec{X}_0%
  + \int_0^t\exp\bp{-\Lambda(t-s)}\vec{\Sigma}\,dW(s)\label{eq:ExactSolutionHarmonic}
\end{equation}
with
\[
  \Lambda \coloneq \begin{pmatrix}0 & -1\\\omega_0^2 &
    \lambda \end{pmatrix},%
  \qquad \vec{\Sigma} 
  \coloneq \begin{pmatrix}0\\\sigma\end{pmatrix}.
\]
$\vec{X}(t)$ follows a Gaussian distribution with mean
\begin{equation}
  \mean{\vec{X}(t)}%
  =\exp\bp{-\Lambda
      t}\vec{X}_0\label{eq:bc}
\end{equation} and covariance matrix
\begin{equation}
  B(t)
    \coloneq \int_0^t 
        \exp\bp{-\Lambda(t-s)}
          \vec{\Sigma}\vec{\Sigma}^\trans
        \exp\bp{-\Lambda^\trans(t-s)} \, ds\,,\label{eq:bd}
\end{equation}
which can easily be evaluated using
a computer algebra system.

\subsubsection{Numerical results}

We compute $\mean{\phi(\vec X(1))}$ for
{$\phi(Q,P)=\exp(-2(P-0.5)^2)\sqrt{2/\pi}$} and the following
set of parameters:
\begin{enumerate}
\item $\omega_0=1$, $\lambda= 4$ and $\sigma=2$.
\item $\omega_0=1$, $\lambda= 9$ and $\sigma=3$.
\end{enumerate}
The initial position and velocity were set to
$Q(t=0)=P(t=0)=-1$ in both cases. Errors are computed using
the exact value computed from \cref{eq:bc,eq:bd}.
{The exact values are 
  $0.447904416997582$ and $0.418086875513087$, respectively.}  The CPU time scaled
by $\epsilon^{-2}$ and the error {(bias error plus one standard
deviation)} scaled by $\epsilon$ are
plotted in \cref{fig:Results2_1_1,fig:Results2_2_1} against
$\epsilon$. The scaling means we expect both graphs to be
flat. {We observe for both parameter sets that the
  integrators based on the exact OU process are
  the most efficient for small $\epsilon$.  Even though
  SVGe uses a weak fourth-order accurate integrator, the
  complexity of MLMC cannot be reduced beyond
  $\order{\epsilon^{-2}}$ and it is the same as for the other
  integrators. The improvements come by improving constants,
  in this case by about a factor 4 in comparison to EMG. For
  the second set of parameter values in
  \cref{fig:Results2_2_1}, the relaxation time is shorter
  and the noise is larger, and the improvement due to the splitting
  methods is even more pronounced (factor 10).}
\begin{figure}[t!]
  \centering
   \includegraphics{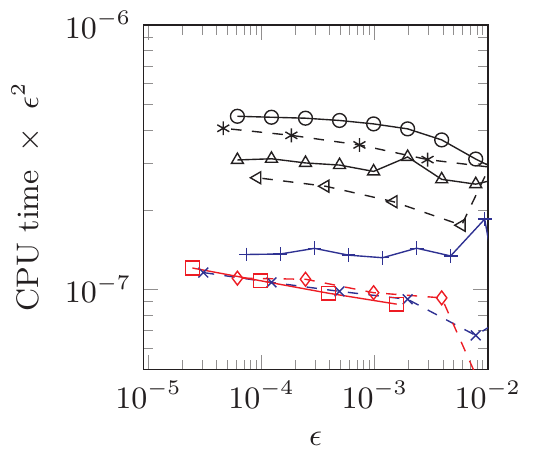}
   \includegraphics{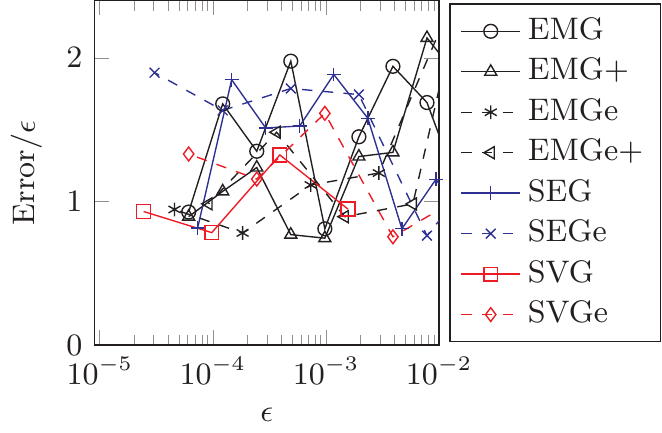}\vspace{-2ex}
  \caption{For the harmonic oscillator with parameter set
    1. The left-hand plot shows the CPU time for a given
    value of $\epsilon$; the time is scaled by $\epsilon^{-2}$
    and this leads to a nearly flat profile in each case. The
    right-hand plot shows the bias error plus one standard deviation; 
    the errors are divided by $\epsilon$ to show
    both mean and standard deviation are $\order{\epsilon}$.} 
  \label{fig:Results2_1_1}\vspace{4.5ex}
  \centering  
 \includegraphics{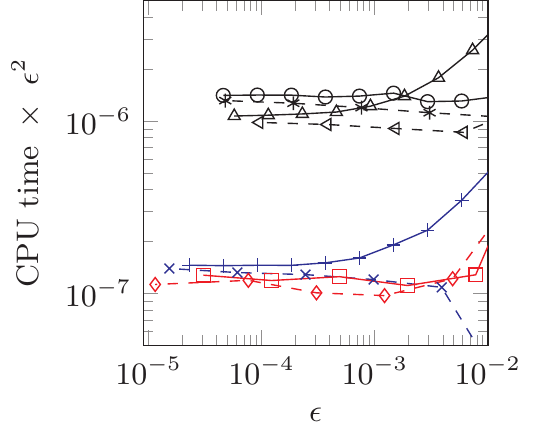}
 \includegraphics{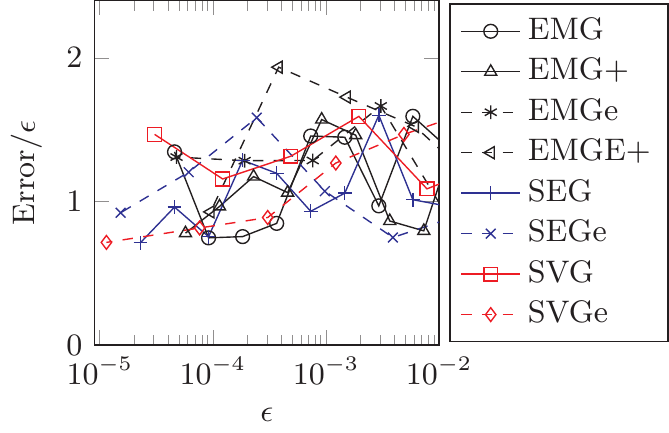}\vspace{-2ex}
  \caption{For the harmonic oscillator
    with parameter set 2. Compared to
    \cref{fig:Results2_1_1}, the difference between the
    splitting methods and Euler--Maruyama is significantly
    larger.}   \label{fig:Results2_2_1}

  \label{fig:Results2_2_1}
\end{figure}  

{In order to take large time-steps, 
 it is necessary to ensure
  the stability of the integrator.  It is well known from
  deterministic differential equations that most explicit
  integrators will have a stability constraint on the
  time-step size. This is the same for SDEs and such
stability constraints may severely restrict the number of
levels that can be employed in the MLMC method and thus its
efficiency \citep{arnulf:2012,AbdulleBlumenthal2013}. Exact sampling of the
  Ornstein--Uhlenbeck process poses no stability
  constraints, allowing for smaller values of $\Ncoarse$ 
  and thus for larger numbers of levels in MLMC in the case of
  splitting methods. For example, in the above simulations, 
  increasing the number of time steps from $\Ncoarse=4$ to
  $\Ncoarse=8$ in Euler-Maruyama (cf.~EMG and EMG+, 
  as well as EMGe and EMGe+) lead to an improvement in efficiency.
  The same change has no effect in SEG. However, the symplectic
  methods we are using for the Hamiltonian part are explicit and 
  have their own stability constraint \citep{skeel02:_langev},
  somewhat limiting this benefit of splitting methods.}

\subsection{Double-well potential}\label{subsec:num2a}\label{eg:dw}

We now change the potential and consider the double-well potential
\[
V(Q)=\frac{\omega_0^2}{8Q_{\min}^2}(Q^2-Q_{\min}^2)^2,
\]
where $Q_{\min}$ and $\omega_0$ are parameters.
{We compute $\mean{\phi(\vec X(T))}$ for
  $\phi(Q,P)\coloneq(Q+Q_{\min})^2+P^2$ (note
  $(Q+Q_{\min})^2$ takes distinct values at the bottom of
  the wells $Q=\pm Q_{\min}$). %

 For the numerical experiments in \cref{fig:Results3_1_1}, we choose parameter values
 $Q_{\min}=\omega_0=1$, $\lambda=2$,  $\sigma=4$, and
 take initial data $Q(t=0)=P(t=0)=-1$.  The scaled CPU time
 and error for $T=1$ are plotted against $\epsilon$ in
 \cref{fig:Results3_1_1}, where errors are computed relative
 to a numerically computed value given by $
 4.52782626985$. It is noticeable again that the splitting
 methods and especially the symplectic
 Euler-based methods are most efficient.

 In \cref{fig:Results3_2_1}, we explore the behaviour of the
 algorithm as we increase the length of the time interval
 $T$. For the plot, we scale the CPU time by $\epsilon^{-2}T$; the
 computation time scales linearly with the number of time
 steps and, by scaling by $T$, we see how the MLMC algorithm
 behaves with increasing $T$. The errors are computed
 relative to the numerically computed values $6.11075602345$
 for $T=2$; $7.11570774835$ for $T=4$; and $7.2125872733$
 for $T=8$. The values for $T=4$ and $T=8$ are close, which
 indicates the system has moved close to the invariant
 measure by this time. In each case, SEG is most efficient
 and we see the measure of CPU time $\times \epsilon^2/T$
 decrease from about $5\times 10^{-5}$ for $T=1$ to about $
 10^{-4}$ for $T=8$. The profiles are also less flat as $T$
 is increased, indicating that the time steps may not be
 small enough to have entered the asymptotic regime. {It is natural
 that the gains become less pronounced, when we come close to the
 invariant measure and the coupling between levels has decayed.}} 
\begin{figure}[t]
   \centering
   \includegraphics{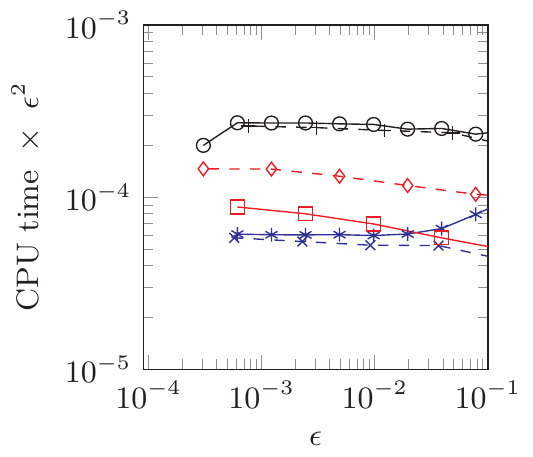} 
   \includegraphics{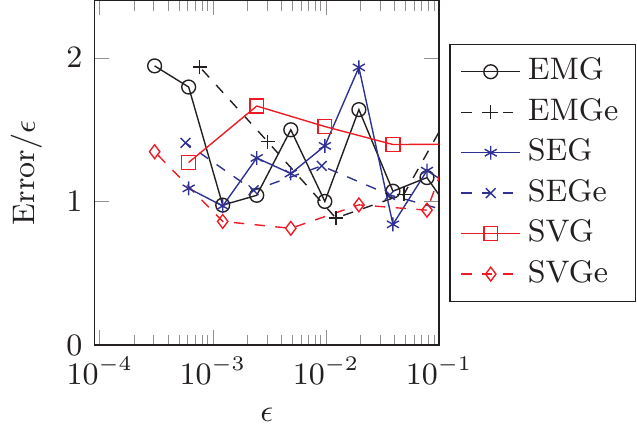} \vspace{-2ex}
   \caption{Numerical results for the double-well
     potential (plots as above).}  \label{fig:Results3_1_1}\vspace{4ex}
    \centering
    \includegraphics{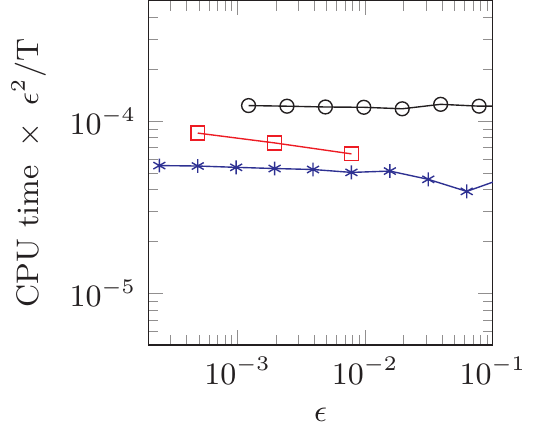}
    \includegraphics{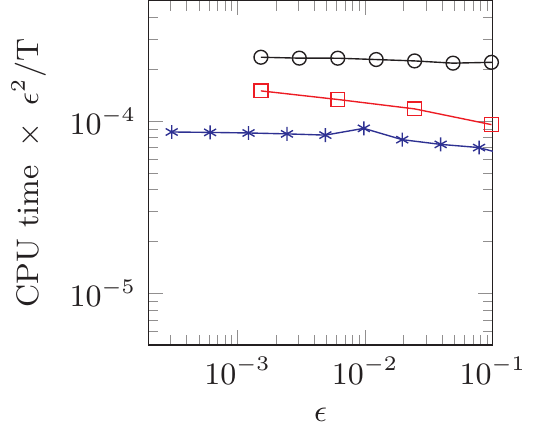} \\
    \includegraphics{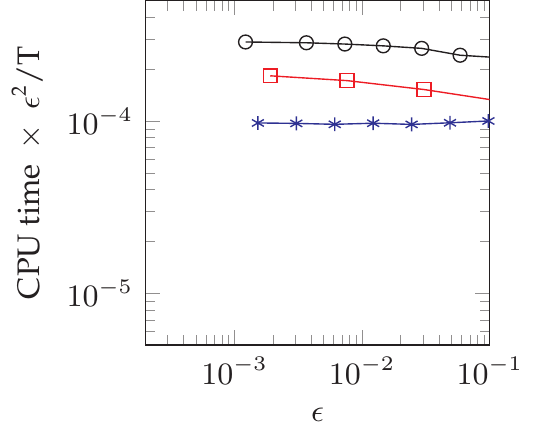}
    \includegraphics{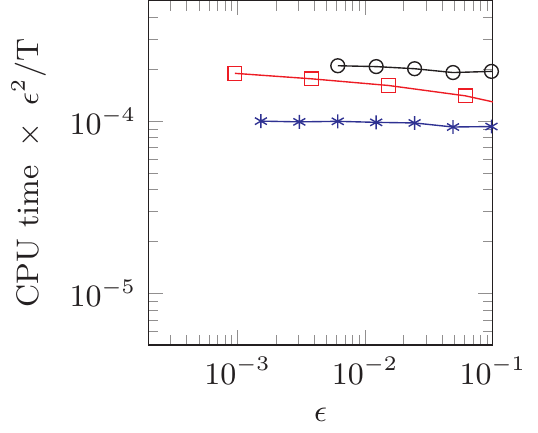} \vspace{-2ex} %
    \caption{From top-left to bottom-right, plots of CPU
      time scaled by $T\epsilon^{-2}$ for $T=1,2,4,8$ and
      the double-well potential with $\omega_0=1$,
      $\lambda=2$, $\sigma=4$.} \label{fig:Results3_2_1}
 \end{figure}

\section{Further enhancements}
{
\cref{maint,maint1} provide a route to analysing the MLMC
entirely by weak-approximation properties of the numerical
method. This has a number of advantages: from the
theoretical point of view, the analysis works for a wider
set of test functions compared to the analysis of
\citep{MR2436856}, which demands that quantities of interest
$\phi$  are globally Lipschitz continuous. From the
algorithmic point of view, the order of weak convergence is
determined by moment conditions up to a given degree
depending on the order of convergence. There are a number of
ways to satisfy these conditions. It is widely known
\citep{MR1214374} that the Gaussian random variables can be
replaced by discrete random variables without disturbing the
weak order of convergence. The obvious question then is whether we can
use discrete random variables to our advantage also in the
context of MLMC.

MLMC depends crucially on the fact that the sum of two
independent Gaussian random variables is also Gaussian. This
allows increments to be generated on the fine levels and
combined to give a random variable with the same
distribution on the next coarser level, using \cref{ut}. Discrete
variables do not have this property. While
\cref{maint} implies the coupling condition of
\cref{theo:ComplexityTheorem}(iii), the sum of two
three-point random variables is not a three-point random variable
and the telescoping sum breaks down. In general,
using  discrete random variables with MLMC introduces extra
error due to the telescoping sum no longer being
exact. Though   \citep{belomestny14:_multil} provides an
approach  that preserves the
telescoping sum by using a different discrete random
variable on each level. 
Here we do not follow this route. Instead, we use the same
discrete random variable on each level, accepting the
additional bias error that this introduces, which crucially
is of higher order. To control this additional bias and to
ensure the total error is still below our chosen tolerance,
we change the number of levels $L$ and the coarsest mesh
size $h_0$.  Discrete random variables are cheaper to
generate than Gaussian random variables and the coarsest
level can be evaluated exactly, which we exploit to achieve
a significant speed-up in the small noise case.}

\subsection{Random variables with discrete distribution}
\label{seq:DiscreteDistribution}

{The modified equations are unchanged if the Gaussian random
  variables in the integrator are replaced by random
  variables with the same moments to order five (including
  all cross moments to order five arising from the
  doubled-up system).} For example, we can replace samples
of \iid $\Nrm(0,1)$ random variables by \iid samples of the
random variable $\zeta$ with distribution
 \begin{equation}\label{16}
\prob{\zeta=0}%
=\frac 23,\qquad
\prob{\zeta=\pm \sqrt 3}%
=\frac 16;%
\end{equation}
or 
\begin{equation}\label{17}
\prob{\zeta=\pm \sqrt{3+\sqrt 6}}%
=c,\qquad%
\prob{\zeta=\pm \sqrt{3-\sqrt 6}}%
=\frac 12-c,\qquad c\coloneq \frac12 \pp{1-\frac{3+\sqrt 6}6}.%
\end{equation}
We refer to $\zeta$ as the three- and four-point
approximations to the Gaussian, respectively.  This is a
well-known trick for weak approximation of SDEs, e.g.
\citep[\S14.2]{MR1214374}.  The  approximations
have a number of advantages, as $\zeta$ is quicker to sample
than a Gaussian and, due to the finite number of states,
averages of functionals of $\zeta$ can be computed exactly.

\subsection{Exact evaluation of the  coarse-level expectation}%
\label{sec:ExactCoarseLevel}%
For all our integrators, the evaluation of the coarse-level estimator
$\Phat_{0}$ with time step $h_{0}=T/\Nt_{0}$ is
the computationally most expensive part of the MLMC
algorithm: even though the number of time steps and hence
the number of samples per path is small, a  large number
of individual paths needs to be evaluated to reduce the
variance of the coarse-level estimator.
This cost can be reduced dramatically if a discrete distribution
as discussed in \cref{seq:DiscreteDistribution} is used for the
individual samples $\vec{\xi}_n$. In this case, a significantly
cheaper estimator, which does not rely on Monte Carlo
sampling, can be constructed. If the random numbers $\vec
\xi_1,\dots,\vec \xi_{\Nt_{0}}$ for each path are drawn from
the three-point approximation in \cref{16}, there is only a
finite number $n_{\vec\xi}$ of possible samples $\vec
\xi^{(i)}=\{\vec\xi^{(i)}_1,\dots,\vec\xi^{(i)}_{\Nt_{0}}\}$,
each with associated probability $\mathbb{P}(\vec{\xi}^{(i)}) =
\mathbb{P}(\vec\xi_1=\vec\xi_1^{(i)})\cdots
\mathbb{P}(\vec\xi_{\Nt_{0} }=\vec\xi_{\Nt_{0}}^{(i)})$.
The expectation value of the quantity of interest can be
calculated exactly on the coarsest level as
\begin{equation}
  \Yhat_{0}^{\operatorname{exact}}= \Phat^{\operatorname{exact}}_{0}%
  = \sum_{i=1}^{n_{\vec{\xi}}} \mathbb{P}(\vec{\xi}=\vec{\vec\xi}^{(i)})\QOI_{0}^{(i)}.
  \label{eq:Yexact}
\end{equation}
{For the three-point approximation, for example, we need to choose
from the $3^d$ possible values of $\vec{\xi}_n$ in each of the $\Nt_{0}$ time steps,} so
$n_{\vec\xi} = (3^d)^{\Nt_{0}}$ is the number of
different samples of $\vec{\xi}$.  Since the estimator
contains no sampling error, its variance is zero. In
Algorithm \ref{alg:MLMC}, we can replace
$\Yhat_{0,\Np_{0}} \mapsto
\Yhat_{0}^{\operatorname{exact}}$ and
$\Vhat_{0,\Np_{0}}\mapsto 0$ in lines
\ref{eq:algEstimators} and
\ref{eqn:Nestimator}. Effectively, this implies that the sum
in line \ref{eqn:Nestimator} only runs from $j=1$ to $L$
and it is not necessary to evaluate $\Np_{0}^+$.

Naively, the computational complexity of evaluating
\cref{eq:Yexact} is given by the
product of the number of different samples and the number of
time steps, $n_{\vec{\xi}}\times
\Nt_{0}=\Nt_{0}(3^{d\Nt_{0}})$. However,
using a recursive algorithm,
the computational complexity can be
reduced to the number of nodes in the product-probability
tree, which is only $\order{n_{\vec{\xi}}} =
\mathcal{O}(3^{d\Nt_{0}})$ Nevertheless, 
this still grows exponentially with {the number $\Nt_{0}$ of coarse time steps}
and so \cref{eq:Yexact} is only competitive
for small values of $\Nt_{0}$ and $d$. However, exact
evaluation can reduce the overall cost of the algorithm dramatically
and this is exploited to significant advantage in
\cref{subsec:num_discrete}.

We now state and prove a modified complexity theorem that allows for additional bias  to be introduced between levels, as well as for a different computational cost on the coarsest level.

Let $\Ptilde_\ell$ be the estimator corresponding to $\Phat_\ell$, but with increments given by \cref{ut}. For Gaussian increments these estimators are the same, but they are different when we use 3-point or 4-point approximations. Recall that the fine, level $\ell$, sample in each of the estimators $\Yhat_{\ell,\Np_\ell}$ uses increments sampled directly from the 3-point or 4-point distribution, while the coarse, level $\ell - 1$, sample  is computed using two consecutive fine increments and formula \cref{ut}.
\begin{theorem} \label{theo:modifiedComplexity}
Let us replace Assumption (ii) of  \cref{theo:ComplexityTheorem} by
\begin{enumerate}
\item[(ii)] $\expect{\Yhat_{0}} = \expect{\Phat_\ell}$ \ and \ $\left|\expect{\Phat_\ell-\Ptilde_{\ell}}\right| \le c_0 h_{\ell}^\gamma$,
\end{enumerate}
for some positive constants $c_0$ and $\gamma > \alpha \ge \frac{1}{2}$. We suppose that all the other assumptions of  \cref{theo:ComplexityTheorem} hold, except that $\Cost{MLMC}_0$ is not necessarily assumed to be bounded by $c_3 \Np_{0} h_{0}^{-1}$ any longer. Then, there exists a positive constant $c_5$ such that for any $\epsilon<1/e$, there are values $\Nt_0$, $L$ and $\Np_\ell$ for which $\Yhat_{\{\Np_\ell\}}$ from \cref{eq22} has a MSE $ < \epsilon^2$ and a computational complexity $\Cost{MLMC}$ with bound
\begin{equation}
  \Cost{MLMC} \le \Cost{MLMC}_0  + c_5 \epsilon^{-2 + 1/\gamma} \, .
\end{equation}
\end{theorem}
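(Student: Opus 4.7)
The plan is to adapt the proof of \cref{theo:ComplexityTheorem} by carefully tracking the additional bias introduced when the telescoping identity fails. Since the coarse level-$(\ell-1)$ sample inside $\Yhat_{\ell,\Np_\ell}$ is generated through the coupling formula \cref{ut}, we have $\expect{\Yhat_{\ell,\Np_\ell}} = \expect{\Phat_\ell - \Ptilde_{\ell-1}}$ for $\ell \ge 1$, while the modified (ii) gives $\expect{\Yhat_{0,\Np_0}} = \expect{\Phat_0}$. Summing over levels and rearranging the (imperfect) telescoping sum produces
\[
\expect{\Yhat_{\{\Np_\ell\}}} - \expect{\QOI} = \bigl(\expect{\Phat_L} - \expect{\QOI}\bigr) + \sum_{\ell=0}^{L-1} \expect{\Phat_\ell - \Ptilde_\ell}.
\]
Assumption (i) bounds the first bracket by $c_1 h_L^\alpha$; the new (ii) bounds each summand in the sum by $c_0 h_\ell^\gamma$. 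Since $h_\ell = h_0 2^{-\ell}$ and $\gamma > 0$, the geometric series satisfies $\sum_{\ell=0}^{L-1} h_\ell^\gamma \le h_0^\gamma/(1-2^{-\gamma})$, and so the total bias is bounded by $c_1 h_L^\alpha + C h_0^\gamma$ for a constant $C$ depending only on $c_0$ and $\gamma$.

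I would then require each of the two bias contributions to be at most $\epsilon/(2\sqrt 2)$, so that the squared bias sits below $\epsilon^2/2$. This forces $h_L = \Theta(\epsilon^{1/\alpha})$ and $h_0 = \Theta(\epsilon^{1/\gamma})$. Since $\gamma > \alpha \ge 1/2$ and $\epsilon < 1/e$, we have $h_0 > h_L$, and the number of levels $L = \lceil \log_2(h_0/h_L)\rceil = \Theta(\log(1/\epsilon))$ is well defined. The remaining variance budget $\epsilon^2/2$ is allocated to the sampling error, and the standard Lagrangian optimization from \citep{MR2436856}, applied only to levels $\ell \ge 1$ (since level $0$ is treated separately), gives the optimal choice $\Np_\ell \propto h_\ell^{3/2}$ under Assumptions (iii) and (iv). The resulting total fine-level cost is
\[
\sum_{\ell=1}^L \Np_\ell\, c_3 h_\ell^{-1} \,\lesssim\, \epsilon^{-2}\left(\sum_{\ell=1}^L h_\ell^{1/2}\right)^{2} \,\lesssim\, \epsilon^{-2}\, h_0 \,\lesssim\, \epsilon^{-2+1/\gamma},
\]
where the inner geometric sum is dominated by its largest term $h_0^{1/2}$. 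Adding the separately bounded level-$0$ cost $\Cost{MLMC}_0$, which we do \emph{not} constrain by (iv), produces the announced bound.

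The main obstacle lies in the first step: recognizing that after telescoping, the per-level bias $\expect{\Phat_\ell - \Ptilde_\ell}$ accumulates across \emph{every} level below $L$, so the total extra bias survives only because $\gamma > 0$ yields a convergent geometric series dominated by the coarsest step $h_0^\gamma$. The strict inequality $\gamma > \alpha$ is then precisely what allows $h_0$ to be chosen significantly larger than $h_L$, keeping the exponentially expensive exact evaluation on level $0$ tractable while still admitting enough refinement to drive the level-$L$ weak bias below tolerance; all other steps are routine modifications of the argument in \citep{MR2436856}.
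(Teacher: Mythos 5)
Your proposal is correct and follows essentially the same route as the paper's proof: the same imperfect telescoping decomposition yielding the extra bias term $\sum_{\ell=0}^{L-1}\expect{\Phat_\ell-\Ptilde_\ell}$ bounded by a geometric series dominated by $h_0^\gamma$, the same choices $h_L=\Theta(\epsilon^{1/\alpha})$ and $h_0=\Theta(\epsilon^{1/\gamma})$, the same Lagrangian allocation $\Np_\ell\propto h_\ell^{3/2}$ on levels $1$ to $L$, and the same cost bound $\epsilon^{-2}h_0\lesssim\epsilon^{-2+1/\gamma}$ plus the unconstrained level-$0$ cost. The only difference is bookkeeping of constants (you split the error budget into halves where the paper uses thirds), which if anything is slightly cleaner.
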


\begin{proof}
We only require slight modifications in the proof of \citep[Theorem 3.1]{MR2436856} to prove this result. In particular, it is sufficient to choose
\[
L = \ceil*{\frac{\log_2(\sqrt{3} c_1 T^\alpha \epsilon^{-1})}{\alpha}}
\]
to bound the bias on the finest level. The factor $\sqrt{3}$ appears, since we now have three error contributions, the bias on the finest level, the bias between levels and the sampling error, and since we require each of these contributions to the MSE to be less than $\epsilon^2/3$.

To guarantee that the bias between levels is less than $\epsilon^2/3$, note that due to assumption (ii) we have
\[
\left|\sum_{\ell=0}^{L-1} \expect{\Phat_\ell-\Ptilde_{\ell}}\right| \le \sum_{\ell=0}^{L-1} \left|\expect{\Phat_\ell-\Ptilde_{\ell}}\right| \le c_0 h_{0}^\gamma \sum_{\ell=0}^{L-1} 2^{-\ell \gamma} < \frac{\sqrt{2} c_0}{\sqrt{2}-1} \, h_0^\gamma\,,
\]
and so a sufficient condition is $h_0 \le c_6 \epsilon^{1/\gamma}$ with $c_6 = \left(\frac{\sqrt{3}}{c_0}\left(1 - \frac{1}{\sqrt{2}}\right)\right)^{1/\gamma}$ .

Finally, setting
\[
\Np_\ell = \ceil*{\frac{3 c_2 h_0^2}{\sqrt{2}-1} \epsilon^{-2} 2^{-3\ell/2}}
\]
and exploiting standard results about geometric series, we get
\[
\sum_{\ell=1}^L \variance{\Yhat_{\ell,\Np_\ell}} \le \frac{\epsilon^2}{3} \left(\sqrt{2}-1\right) \sum_{\ell=1}^L \frac{c_2 h_\ell^2}{c_2 h_0^2} 2^{3\ell/2} \le \frac{\epsilon^2}{3} \left(\sqrt{2}-1\right) \sum_{\ell=1}^L 2^{-\ell/2} \le \frac{\epsilon^2}{3}\,.
\]
The computational cost can then be bounded by
\[
\Cost{MLMC} \le \Cost{MLMC}_0 + c_3 \sum_{\ell=1}^L \Np_\ell h_\ell^{-1} \le \Cost{MLMC}_0 + \frac{3 c_2 c_3 h_0}{\sqrt{2}-1} \epsilon^{-2} \sum_{\ell=1}^L 2^{-\ell/2}
\]
which leads to the desired bound with $c_5 = 3 c_2 c_3 c_6 \left(\sqrt{2}-1\right)^{-2}$. (Note that as in \citep{MR2436856} this (optimal) choice of $\Np_\ell$
is obtained by minimising the cost on levels 1 to $L$ subject to the constraint that the sum of the variances is less than $\epsilon^2/3$.)
\end{proof}
If we use a $q$-point approximation and the expected value
on the coarsest level is computed excactly, as described in
\cref{sec:ExactCoarseLevel}, then $\Cost{MLMC}_0 =
\mathcal{O}(q^{d\Nt_0}) =
\mathcal{O}\Big(\eta^{\epsilon^{-{1}/{\gamma}}}\Big)$,
for some $\eta > 1$. Hence, the total cost grows
exponentially with $\epsilon$, as expected. However, for
practically relevant values of $\epsilon$, the exponential
term may not be dominant and we may get significant
computational savings, as we will see in the next
section. Note that $\gamma = 2$ for the three-point and
$\gamma=3$ for the four-point case, leading to a cost of
$\mathcal{O}(\epsilon^{-3/2})$ and
$\mathcal{O}(\epsilon^{-5/3})$ for the computation of the
correction terms on levels 1 to $L$, respectively.

Since the sampling of discrete random variables is
significantly cheaper, it may also be of interest to use
standard Monte Carlo on the coarsest level, as in the
earlier sections of this paper. If we slightly increase the
constant in the formula for $N_\ell$, $\ell=1,\ldots,L$, in
the proof of \cref{theo:modifiedComplexity} and choose
$\Np_0 = \mathcal{O}(\epsilon^{-2})$ such that the total
variance over all levels is below $\epsilon^2/3$, then the
dominant cost will be $\Cost{MLMC}_0 = \mathcal{O}(\Np_0
h_0^{-1})$, and so $\Cost{MLMC} \le c_5^*
\epsilon^{-2-1/\gamma}$, which will be
$\mathcal{O}(\epsilon^{-5/2})$ and
$\mathcal{O}(\epsilon^{-7/3})$ in the three- and four-point
cases, respectively. However, in practice $c_5^*$ is
significantly smaller than the constant $c_4$ in Theorem
\ref{theo:ComplexityTheorem}, so that for moderate values of
$\epsilon$, the use of discrete random variables will pay
off.
\subsection{Numerical experiments with discrete random variables}
\label{subsec:num_discrete}
\begin{figure}[t]
    \centering
    \includegraphics[width=0.5\linewidth]{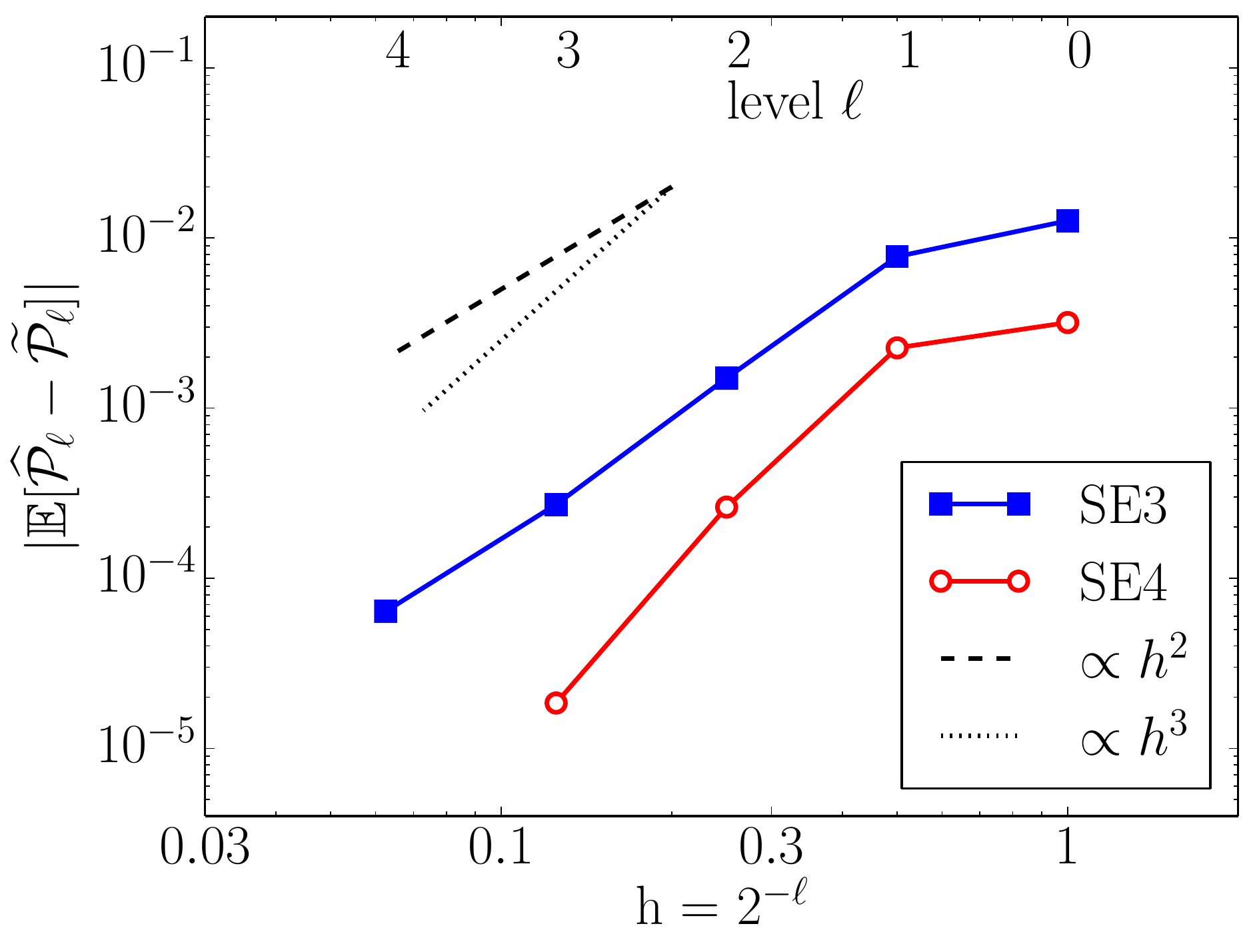}\vspace{-2ex}
    \caption{$h$-dependency of the additional bias term $|\expect{\Phat_\ell-\Ptilde_{\ell}}|$ (see (ii) in  \cref{theo:modifiedComplexity}) for $\sigma_0=\lambda=1$, $\sigma=0.4$ computed using the symplectic Euler/exact OU splitting. Results are shown both for three-point (SE3) and four-point (SE4) random variables.}
\label{fig:additional_bias}
 \end{figure}
 We carry out numerical experiments as in \cref{eg:ho} with the
 damped harmonic oscillator, but change the parameters
 slightly to $\omega_0=\lambda=1$, $\sigma=0.4$ (i.e.,
 smaller noise). Instead of sampling from a Gaussian
 distribution, we use discrete random numbers, which
 introduce an additional bias as discussed above. To
 quantify this bias numerically, we plot the difference
 $\left|\expect{\Phat_\ell-\Ptilde_{\ell}}\right|$ in
 \cref{fig:additional_bias} for the symplectic Euler/exact
 OU method both for three-point (SE3) and four point (SE4)
 distributions. {The figure shows that, as predicted in
   \cite{MR1352201}, the additional bias is proportional to
   $h^2$ for SE3 and to $h^3$ for SE4. We have also studied
   the dependence on the noise term (not shown here) and
   found that, as $\sigma$ gets smaller, the additional bias
   is reduced very rapidly (proportional to $\sigma^3$ and
   $\sigma^4$, respectively).
\begin{figure}[t]
    \centering
    \includegraphics{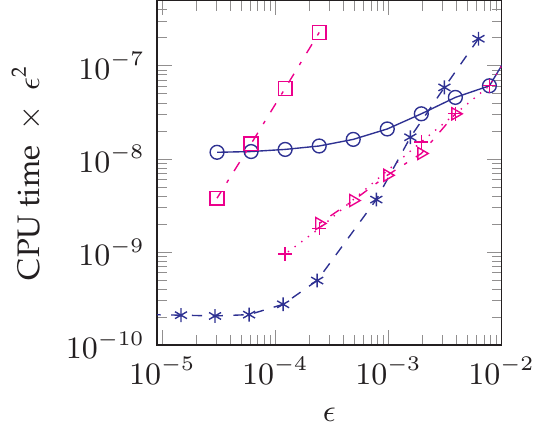}
    \includegraphics{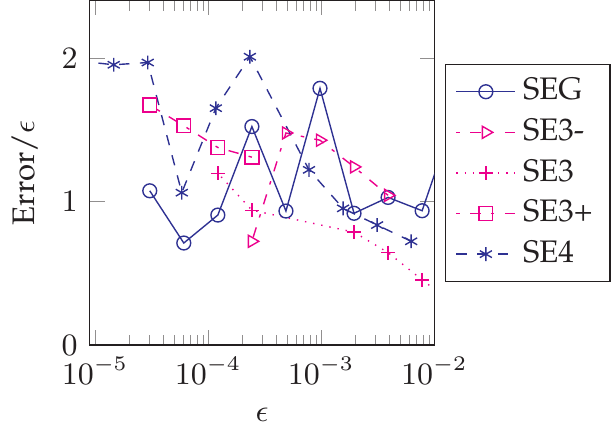}\vspace{-2ex}
    \caption{Harmonic oscillator with $\lambda=1$ and
      $\sigma=0.4$ computed using the symplectic Euler/exact
      OU splitting method using three-point (SE3-, SE3, SE3+) and
      four-point (SE4) random variables.} \label{fig:discrete_distribution}
 \end{figure}

 For the same setup, we measure the computational cost and
 the total error (consisting of the statistical error,
 discretisation error and the additional bias introduced by
 sampling from discrete distributions). We calculate the
 same quantity of interest as in \cref{eg:ho}.
 \cref{fig:discrete_distribution} shows the results both for
 Gaussian random variables (SEG) and for the three- and
 four-point distributions (SE3 and SE4) with $\Nt_0 =
   8$. For the discrete distributions, the coarse-grid
   expectation value is calculated exactly.  For the
   three-point distribution, we also varied the number of
   time steps on the coarsest level and use $\Nt_0=4$
   (SE3-), $\Nt_0=8$ (SE3) and $\Nt_0=16$ (SE3+). In each
 case, we only show results up to the point where the
 additional bias error becomes too large. For fixed
 $\epsilon$, the SE3+ method is more expensive than SE3 and
 SE3-, since the cost of the exact coarse-level evaluation
 grows exponentially with the number of time steps. On the
 other hand, using smaller time steps on the coarsest level
 allows the use of this method for smaller values $\epsilon$
 where the additional bias becomes too large for SE3- and
 SE3. The additional bias in the SE4 method is so small that
 the method can be used up to values as small as
 $\epsilon=10^{-5}$. Comparing the cost of this method to
 the Gaussian case shows that using a discrete four-point
 distribution is more than 50-times faster in this case.

We conclude that, if used with caution, approximating the Gaussian
increments in \cref{eq:1} by discrete approximations and calculating 
the coarse-level expectation value exactly can significantly improve
the efficiency of the multilevel method.}

\section{Conclusion}

{\cref{tab1} summarises our findings:} MLMC gives a
significant speed-up over the traditional Monte Carlo
computation of averages and, even though the optimal
complexity estimate $\order{\epsilon^{-2}}$ {for 
Monte Carlo-type methods} holds
for all the integrators under study, there is significant
variation between the integrators. Splitting methods are
particularly appropriate for the Langevin equation and
using the exact OU solution yields a  more stable
integrator than Euler--Maruyama, even though both integrators are
explicit.  In the experiments, the difference in
computation time between Euler--Maruyama and the splitting
methods is greater {when the dissipation $\lambda$ 
is higher, since Euler--Maruyama suffers from a more severe 
time-step restriction (cf.~Figures \ref{fig:Results2_1_1}--\ref{fig:Results3_2_1}).}

\begin{table}[t]
  \centering
  \begin{tabular}{c||c c||c c|}
& \multicolumn{2}{c||}{Harmonic Oscillator (Set 2)}&
                                                   \multicolumn{2}{c|}{Double-well
                                                    Potential} \\
    & $\epsilon=2.4\times 10^{-4}$  &ratio& $\epsilon=2.44\times 10^{-3}$
    &ratio\\
\hline
    MC w.~EMG& 467  sec& 13$\times$ slower&
    1710 sec & 378$\times$ slower\\
    MLMC w.~EMG& 33.8 sec&1& 45.2 sec& 1\\
    MLMC w.~SEGe& 2.15 sec& 15$\times$ faster& 10.5 sec& 4.3$\times$ faster\\
  \end{tabular}
\caption{{Comparison of Monte Carlo with Euler--Maruyama, MLMC with
  Euler--Maruyama and MLMC with the
  symplectic Euler/OU integrator and extrapolation (using Gaussian
  increments).}}\label{tab1}\vspace{3ex}
\end{table}

This paper also introduced an alternative analysis method
for MLMC based on modified equations. It provides a
convenient approach to MLMC through weak-approximation
theory; strong-approximation theory is only needed to relate
the original and modified equations and not the numerical
methods. This  accommodated the use of the
splitting method and the exact OU
solution easily. 

{ The weak-approximation analysis motivated the use of
  discrete random variables, such as three- and four-point
  approximations to the Gaussian. In an example with small
  noise ($\sigma=0.4$ and $\lambda=1$), we saw between one and two orders of
  magnitude speed-up for a useful range of $\epsilon$
  because we can evaluate the coarse level exactly. This method
  is easy to implement and it works well because the dominant cost
  lies on the coarsest level for these
  problems. While the speed improvements are impressive,
  this method should be used with care as it introduces an
  extra bias error. The extra bias can be estimated
  as shown in \cref{fig:additional_bias}. The improvement would
  be less dramatic in higher dimensions as the number of
  samples required would increase dramatically and it may be
  impossible to compute the coarse level exactly. As an interesting
  side result, we proved a modified complexity theorem that allows for
  extra bias to be introduced between levels in MLMC.}

\appendix{}
\section{Proof of \cref{lemma:1}}\label{useful_lem}

\begin{proof}  This is a standard Gronwall argument with the 
Ito isometry. The integral equation for the difference $\vec
  X(t)-\vec X_h(t)$ is
    \begin{align*}
      \vec X(t)-\vec X_h(t)&=\int_0^t\pp{ \vec f(\vec
      X(s))-\tvecf(\vec X_h(s))}\,ds\\
      &\quad+\int_0^t \pp{G(\vec X(s))-
      \tG(\vec X_h(s))}\,d\vec W(s),
    \end{align*}
    where $\tvecf\coloneq\vec f+ h \vec f_1$ and $\tG\coloneq G+h G_1$. By conditions (i) and (ii),
 \begin{align*}%
   \norm{\vec f(\vec X(s))-\vec f(\vec X_h(s)) -h \vec
     f_1(\vec X_h(s))}_{L^2(\Omega,\real^d)}%
   \le& L \norm{\vec X(s)-\vec
     X_h(s)}_{L^2(\Omega,\real^d)}%
   + C_1 h
  \end{align*}
  and similarly for $G$.  Assume $\vec X(0)=\vec
  X_h(0)$. 
The Ito isometry and Jensen's inequality give
\begin{align*}
  \mean{ \norm{ \vec X(t)-\vec X_h(t)}^2}%
  \le&2t\int_0^t \mean{\norm{\vec f(\vec X(s))-\tvecf(\vec
      X_h(s))}^2}\,ds\\%
  &+2\int_0^t \norm{G(\vec X(s))- \tG(\vec
    X_h(s))}^2_\rmF\,ds\\
  \le&4t\int_0^t L^2\mean{\norm{\vec X(s)-\vec X_h(s)}^2}+C_1^2
    h^2\,ds\\%
  &+4\int_0^t L^2\mean{\norm{\vec X(s)-\vec X_h(s)}^2}+C_1^2
    h^2\,ds.
    \end{align*}
    Gronwall's inequality gives $\order{h^2}$ bounds on
    $\mean{\norm{\vec X(t)-\vec X_h(t)}^2}$. A similar argument
    can be applied to $\vec Y$ and applying the Lipschitz
    condition on $\psi$ completes the proof.
\end{proof}%
%

\end{document}